\let\pa\partial
\let\na\nabla
\let\eps\varepsilon
\newcommand{\R}{{\mathbb R}}
\newcommand{\dd}{\mathrm{d}}
\newcommand{\diver}{\operatorname{div}}
\newtheorem{theorem}{Theorem}
\newtheorem{definition}[theorem]{Definition}
\newtheorem{assumptions}[theorem]{Assumption}
\begin{document}

\title[Bayesian inversion]{Bayesian inversion for the identification of the doping profile in unipolar semiconductor devices}

\author[L. Taghizadeh]{Leila Taghizadeh}
\address{Institute of Analysis and Scientific Computing, TU Wien, Wiedner Hauptstra\ss e 8--10, 1040 Wien, Austria}
\email{leila.taghizadeh@tuwien.ac.at}

\author[A. J\"ungel]{Ansgar J\"ungel}
\address{Institute of Analysis and Scientific Computing, TU Wien, Wiedner Hauptstra\ss e 8--10, 1040 Wien, Austria}
\email{juengel@tuwien.ac.at} 

\date{\today}

\thanks{The first author has been supported by an Elise-Richter grant, funded by the Austrian Science Fund (FWF), grant DOI 10.55776/V1000. The last author acknowledges partial support from the FWF, grant DOI 10.55776/P33010 and 10.55776/F65. This work has received funding from the European Research Council (ERC) under the European Union's Horizon 2020 research and innovation programme, ERC Advanced Grant NEUROMORPH, no.~101018153}. 

\begin{abstract}
A rigorous Bayesian formulation of the inverse doping profile problem in infinite dimensions for a stationary linearized unipolar drift-diffusion model for semiconductor devices is given. The goal is to estimate the posterior probability distribution of the doping profile and to compute its posterior mean. This allows for the reconstruction of the doping profile from voltage--current measurements. The well-posedness of the Bayesian inverse problem is shown by proving boundedness and continuity properties of the semiconductor model with respect to the unknown parameter. A preconditioned Crank--Nicolson Markov chain Monte--Carlo method for the Bayesian estimation of the doping profile, using a physics-informed prior model, is proposed. The numerical results for a two-dimensional diode illustrate the efficiency of the proposed approach.
\end{abstract}

\keywords{Drift-diffusion model, inverse doping profile problem, Bayesian inversion, Markov chain Monte--Carlo method, physics-informed prior model.}  
 
\subjclass[2000]{62F15, 65C05, 65N21, 35J57, 35J60, 35R30.}

\maketitle


\section{Introduction}

In semiconductor manufactoring, semiconductor crystals are typically doped by impurities to modulate the electrical and structural properties of the crystal. Doping can be achieved during the crystal growth via vapor-phase epitaxy or later by diffusion and ion implantation techniques \cite[Chap.~10]{MiHo92}. Since the doping concentration is not always uniform across the thickness, which may lead to device failures, it is important to determine the doping profile inside the semiconductor. A noninvasive method to estimate doping inhomogeneities is the identification by voltage--current measurements \cite{KFBS95}. The inverse problem to reconstruct the doping profile from measurements of the current density at an Ohmic contact is highly ill-posed because of the limited number of data points. In the literature, the semiconductor inverse problem has been solved by deterministic approaches only, using drift-diffusion equations \cite{BELM04,BEMP01,Lei06}, Boltzmann--Poisson models \cite{CGR11}, or more recently, data-driven methods \cite{PFLRH24}. In this paper, we investigate the doping profile inverse problem from a Bayesian perspective for the first time (up to our knowledge). 

Bayesian inversion is a statistical inversion approach, which allows one to incorporate the uncertainties in the measurement data to estimate the probability density of the unknown parameter, given prior knowledge and observation data. Bayesian estimation has been used, for instance, for nanowire field-effect sensors \cite{KSH20}, biofilm growth modeling \cite{TKPH20}, electrical impedance tomography in medical imaging \cite{KTH21}, dynamic spectroscopic current imaging \cite{SLM+18}, and time-resolved photoluminescence \cite{FHL23}. 

Because of the high complexity of the full bipolar drift-diffusion system, we consider a simplified model, the stationary linearized unipolar drift-diffusion equations for semiconductors (close to equilibrium) as in \cite{BELM04,BEMP01}. The linearization is motivated by the fact that stationary solutions may be nonunique for large applied voltages (far from equilibrium). The model consists of two decoupled elliptic problems, a continuity equation for the Slotboom variable for electrons and the Poisson equation for the electric potential. The corresponding inverse problem is split into two PDE-governed inverse problems. The difficulties are the high dimensionality of the parameter space, the ill-posednes of the inverse problem, and the limited measured data from the surface electrode. 

To overcome these issues, we perform a preconditioned Crank--Nicolson Markov chain Monte--Carlo (MCMC) estimation of the posterior distribution of the unknown parameter in an infinite-dimensional setting. This method produces random samples from a target probability distribution for which direct sampling is difficult, and it is well-suited for high-dimensional sampling problems. We use a physics-informed prior (from the Poisson equation) to enhance the prior knowledge of the unknown parameter, to compensate for the data limit, and hence to enhance the posterior estimation and the resulting Bayesian reconstruction of the doping profile.

The paper is organized as follows. In Section \ref{sec.model}, we introduce the mathematical semiconductor model, specify our simplifying
assumptions that lead to the forward model, and introduce the semiconductor inverse problem. The Bayesian inverse problem is formulated in Section \ref{sec.bayes}, where we also show the well-posedness of the Bayesian posterior. The proposed preconditioned Crank--Nicolson MCMC algorithm is detailed in Section \ref{sec.MCMC}, and numerical results are reported in Section \ref{sec.num}. Finally, conclusions are drawn in Section \ref{sec.conc}. 


\section{Forward model}\label{sec.model}

In this section, we introduce the drift-diffusion equations for semiconductors, detail our simplifying assumptions, and formulate the semiconductor inverse problem. 

\subsection{Drift-diffusion equations}

The semiconductor drift-diffusion equations for are wide\-ly used to describe the charge transport of electrons and holes (defect electrons) in semiconductor devices \cite{Mar86,Sel84}. The equations can be derived from the semiconductor Boltzmann equation in the diffusion limit \cite{Jue09}. The stationary equations for the electron density $n$, the hole density $p$, and the electric potential $V$ are given by 
\begin{equation}\label{2.biDD}
\begin{aligned}
  & \diver J_n = qR(n,p), \quad J_n = q(D_n\na n - \mu_n n\na V), \\
  & \diver J_p = -qR(n,p), \quad J_p = -q(D_p\na p + \mu_p p\na V), \\
  & \diver(\eps_s V) = q(n-p-C(x))\quad\mbox{in }\Omega,
\end{aligned}
\end{equation}
where $\Omega\subset\R^d$ ($d\ge 1$) is the physical domain and the physical parameters are the elementary charge $q$, the electron and hole diffusion coefficients $D_n$ and $D_p$, the electron and holes mobilities $\mu_n$ and $\mu_p$, respectively, and the semiconductor permittivity $\eps_s$. The recombination--generation term reads as $R(n,p) = R_0(n,p)(np-n_i^2)$, where $R_0(n,p)=(\tau_n(n+n_i)+\tau_n(p+n_i))^{-1}$ is the Shockley--Read--Hall model with the intrinsic density $n_i$ and the constants $\tau_n$, $\tau_p>0$. The current densities $J_n$ and $J_p$ consist of the diffusion current densities $qD_n\na n$, $-qD_p\na p$ and the drift densities $-q\mu_n n\na V$, $-q\mu_p p\na V$, respectively. We suppose that the diffusivities and mobilities are related by the Einstein relations $D_{n/p}=U_T\mu_{n/p}$, where $U_T=k_B\theta_L/q$ is the thermal voltage, $k_B$ the Boltzmann constant, and $\theta_L$ the lattice temperature. The doping profile $C(x)$ denotes the given concentration of dopant atoms in the device. 

We assume that the boundary $\pa\Omega=\pa\Omega_D\cup\pa\Omega_N$ is partitioned into two parts, the union of Ohmic contacts $\pa\Omega_D$ and the insulated parts $\pa\Omega_N$. The boundary conditions read as
\begin{equation}\label{2.bc}
\begin{aligned}
  n = n_D, \quad p = p_D, \quad V = V_D &\quad\mbox{on }\pa\Omega_D, \\
  \na n\cdot\nu = \na p\cdot\nu = \na V\cdot\nu = 0 
  &\quad\mbox{on }\pa\Omega_N,
\end{aligned}
\end{equation}
where the Dirichlet data are determined from the assumptions that the total space charge vanishes on $\pa\Omega_D$ and the densities are in thermal equilibrium on $\Gamma_D$ \cite[Sec.~5.3]{Jue09},
\begin{align*}
  n_D = \frac12\Big(C+\sqrt{C^2+4n_i^2}\Big), \
  p_D = \frac12\Big(-C+\sqrt{C^2+4n_i^2}\Big), \
  V_D = V_{\text{bi}} + U\quad\mbox{on }\pa\Omega_D.
\end{align*}
Here, $U$ is the applied potential and $V_{\text{bi}}=U_T\ln(n_D/n_i)=U_T\operatorname{arsinh}(C/(2n_i))$ the built-in potential.

In this paper, we restrict ourselves to a simple $pn$-diode. The Dirichlet boundary consists of two Ohmic contacts $\Gamma_N$ and $\Gamma_P$, i.e.\ $\pa\Omega_D=\Gamma_P\cup\Gamma_N$; see Figure \ref{fig.diode}. Here, the doping profile is supposed to be piecewise constant according to
\begin{align*}
  C(x) = \begin{cases}
  C_N &\quad\mbox{for }x\in\Omega_N, \\
  -C_P &\quad\mbox{for }x\in\Omega_P,
  \end{cases}
\end{align*}
where $C_N$, $C_P>0$ are two constants, $\Omega_N$ is called the $n$-region, and $\Omega_P$ the $p$-region. 

\begin{figure}[ht]
\centering
\includegraphics[width=90mm]{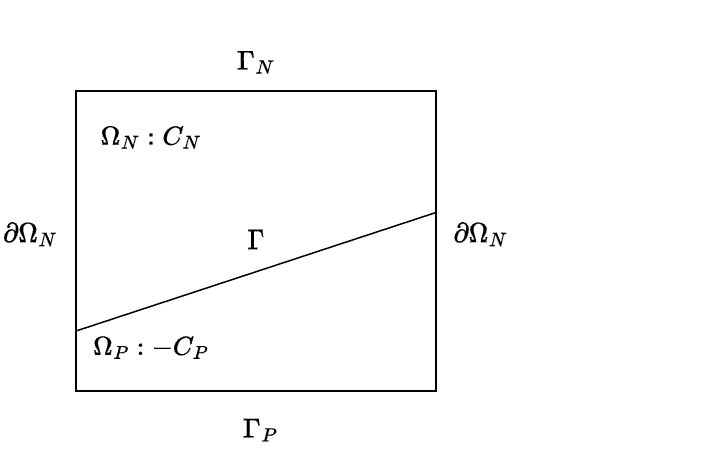}
\caption{Cross-section of a $pn$-diode with two Ohmic contacts $\Gamma_N$ and $\Gamma_P$. The junction $\Gamma$ separates the $n$- and $p$-regions.}
\label{fig.diode}
\end{figure}

Scaling the equations as in \cite[Sec.~2.4]{Mar86} and introducing (after the scaling) the Slotboom variables $u=\delta^{-2}\exp(-V)n$, $v=\delta^{-2}\exp(V)p$, system \eqref{2.biDD} becomes
\begin{equation}\label{2.biDDs}
\begin{aligned}
  & \diver J_n = \delta^4 Q(V,u,v)(uv-1), \quad
  J_n = \mu_n\delta^2 e^V\na u, \\
  & \diver J_p = -\delta^4 Q(V,u,v)(uv-1), \quad
  J_p = -\mu_p\delta^2 e^{-V}\na v, \\
  & \lambda^2\Delta V = \delta^2(e^V u - e^{-V}v) - C(x)
  \quad\mbox{in }\Omega,
\end{aligned}
\end{equation}
where $\delta^2=n_i/\max_\Omega |C|$ is the scaled intrinsic number and
$Q(V,u,v)=R_0(n,p)$. The scaled boundary data is given by
\begin{equation}\label{2.bcs}
\begin{aligned}
  n_D = \frac12\Big(C + \sqrt{C^2+4\delta^4}\Big), \quad
  p_D = \frac12\Big(-C + \sqrt{C^2+4\delta^4}\Big), \quad
  V_D = V_{\text{bi}} + U,
\end{aligned}
\end{equation}
where $V_{\text{bi}}=\operatorname{arsinh}(C/(2\delta^2))$ is the scaled built-in potential. Under suitable assumptions on the data, there exists a weak solution to system \eqref{2.bc}, \eqref{2.biDDs}, and \eqref{2.bcs} \cite[Theorem 3.2.1]{Mar86}. The uniqueness of weak solutions is more delicate, since certain semiconductor devices admit multiple stationary states for sufficiently large applied voltages. However, it is known that the weak solution is unique if the applied voltage is sufficiently small in the $L^\infty(\pa\Omega_D)\cap H^1(\pa\Omega_D)$ norm \cite[Theorem 2.4]{BEMP01}. 

A special steady state is given by the thermal equilibrium, which is is defined by $uv=1$ and $U=0$. Then $u=v=1$ \cite[Sec.~5.3]{Jue09} and consequently $J_n=J_p=0$ as well as
\begin{align*}
  & \lambda^2\Delta V_e = \delta^2(e^{V_e} - e^{-V_e}) - C(x)
  \quad\mbox{in }\Omega, \\
  & V_e = V_{\text{bi}}\quad\mbox{on }\pa\Omega_D, \quad
  \na V_e\cdot\nu = 0 \quad\mbox{on }\pa\Omega_N.
\end{align*}
Since the forward model is uniquely solvable only for sufficiently small $U$, we consider the linearized model around the equilibrium state. To simplify further, we also assume that there are no recombination--generation effects ($Q=0$) and that the main charge carrier are electrons ($p=0$). This leads to the unipolar model
\begin{align*}
  & \diver(e^V\na u) = 0\quad\mbox{in }\Omega, \quad u=u_D
  \quad\mbox{on }\pa\Omega_D, \quad 
  \na u\cdot\nu = 0 \quad\mbox{on }\pa\Omega_N, \\
  & \lambda^2\Delta V = \delta^2 e^V u - C(x)
  \quad\mbox{in }\Omega, \quad V = V_{\text{bi}} + U 
  \quad\mbox{on }\pa\Omega_D, \quad
  \na V\cdot\nu = 0 \quad\mbox{on }\pa\Omega_N,
\end{align*}
where $u_D=\delta^{-2}\exp(-V_D)n_D=\exp(-U)$. The linearized unipolar drift-diffusion model (close to equilibrium) is obtained by computing the variational derivative with respect to $U$ at $U=0$ in the direction of $h$,
\begin{align}\label{2.u1}
  & \diver\widehat{J}_n = 0, \quad  
  \widehat{J}_n = \mu_n\delta^2 e^{V_e}\na\widehat{u}
  \quad\mbox{in }\Omega, \\
  & \widehat{u} = -h \quad\mbox{on }\pa\Omega_D, \quad
  \na\widehat{u}\cdot\nu = 0 \quad\mbox{on }\pa\Omega_N, \label{2.u2}
\end{align}
where $V_e$ is the unique solution to the Poisson equation at equilibrium,
\begin{align}\label{2.Ve}
  \lambda^2\Delta V_e = \delta^2 e^{V_e} - C(x)\quad\mbox{in }\Omega, \quad  V_e = V_{\text{bi}}\quad\mbox{on }\pa\Omega_D, \quad
  \na V_e\cdot\nu = 0 \quad\mbox{on }\pa\Omega_N. 
\end{align}
We call system \eqref{2.u1}--\eqref{2.Ve} the {\em semiconductor forward problem}. Problem \eqref{2.Ve} possesses a unique bounded weak solution (if $V_{\text{bi}}$ is bounded). Then problem \eqref{2.u1}--\eqref{2.u2} is also uniquely solvable and $\widehat{u}\in H^1(\Omega)$. 

For the inverse problem, we need to compute the normal trace $\widehat{J}_n\cdot\nu$ at the contact $\Gamma_N$. Generally, the normal trace is defined as an element of the dual of the Lions--Magenes space $H_{00}^{1/2}(\Gamma_N)$, where $u\in H_{00}^{1/2}(\Gamma_N)$ if and only if the trivial extension of $u$ to $\pa\Omega$ belongs to $H^{1/2}(\pa\Omega)$ \cite[Chap.~18]{BaCa84}. As in \cite{BELM04,BEMP01,Lei06}, we prefer to work with normal traces that are defined a.e., since these are the measured real-valued data. Indeed, if the Dirichlet boundary data lies in the space $H^{3/2}(\pa\Omega_D)$ and the geometry of the domain is as in Figure \ref{fig.diode}, the solution to a linear elliptic problem with mixed Dirichlet--Neumann boundary conditions has the maximal regularity $H^2(\Omega)$. In particular, $V_e\in H^2(\Omega)\hookrightarrow L^\infty(\Omega)$, $\widehat{u}\in H^2(\Omega)$, and consequently, $\widehat{J}_n\in H^1(\Omega)$. In this situation, the normal trace $\widehat{J}_n\cdot\nu$ on $\Gamma_N\subset\pa\Omega_D$ is an element of $H^{1/2}(\Gamma_N)$ and exists a.e.


\subsection{Inverse problem}

We introduce the stationary voltage--current map 
\begin{align*}
  \Sigma_{\text{VC}}: H^{3/2}(\pa\Omega_D)\to H^{1/2}(\Gamma_N), 
  \quad U\mapsto \widehat{J}_n\cdot\nu|_{\Gamma_N},
\end{align*}
where $\widehat{J}_n$ is given by \eqref{2.u1} and $\Gamma_N\subset\pa\Omega_D$ is a Dirichlet contact (see Figure \ref{fig.diode}). We deduce from the discussion of the previous subsection that the mapping $\Sigma_{\text{VC}}$ is well defined. Note that we require pointwise measurements at the contact $\Gamma_N$. An alternative approach is given by current-flow measurements from $\int_{\Gamma_N}\widehat{J}_n\cdot\nu ds$. They are easier to realize in practice but they yield much less information about the doping profile than pointwise measurements \cite[Sec.~3]{BELM04}. 

The G\^ateaux derivative of $\Sigma_{\text{VC}}$ at $U=0$ in the direction $h\in H^{3/2}(\pa\Omega_D)$ exists and reads as
\begin{align*}
   \Sigma'_{\text{VC}}(0)h 
  = \mu_n\delta^2e^{V_{\text{bi}}}\na\widehat{u}\cdot\nu\big|_{\Gamma_N},
\end{align*}
where $\widehat{u}$ solves \eqref{2.u1}--\eqref{2.u2}; see \cite[Prop.~3.1]{BEMP01} for a proof. The inverse problem of identifying the doping profile, given the current measurements, corresponds to the identification of the doping profile from the parameter-to-observable map
\begin{align*}
  G: \operatorname{dom}(G)\to \mathcal{L}(H^{3/2}(\pa\Omega_D);H^{1/2}(\Gamma_N)), 
  \quad C\mapsto \Sigma'_{\text{VC}}(0),
\end{align*}
where the domain of $G$ is given by
\begin{align*}
  \operatorname{dom}(G) = \big\{C\in L^2(\Omega):\underline{C}\le C\le\overline{C}\mbox{ a.e.\ in }\Omega\big\}
\end{align*}
and $\underline{C}$, $\overline{C}$ are positive constants. We summarize the inverse problem in Algorithm \ref{algo1}; also see \cite[Sec.~4]{BELM04}.

\begin{algorithm}[th!]
\caption{Inverse doping profile problem with stationary linearized unipolar model}\label{algo1}
\begin{enumerate}
\item[(i)] Define $\gamma(x):=\exp(V_e(x))$ for $x\in\Omega$.
\item[(ii)] Solve the parameter identification problem
\begin{align}\label{2.gamma}
  \diver(\gamma(x)\na\widehat{u}) = 0 \quad\mbox{in }\Omega, \quad
  \widehat{u} = U\quad\mbox{on }\pa\Omega_D, \quad  
  \na\widehat{u}\cdot\nu = 0 \quad\mbox{on }\pa\Omega_N,
\end{align}
given the measurements of $\Lambda_\gamma(U) :=\Sigma'_{\text{VC}}(0)U = \mu_n\delta^2e^{V_{\text{bi}}}\na\widehat{u}\cdot\nu|_{\Gamma}$.
\item[(iii)] Determine the doping profile from 
\begin{equation}\label{2.C}
  C(x)=\gamma(x)-\lambda^2\Delta(\ln\gamma)(x) \quad\mbox{for } x\in\Omega.
\end{equation} 
\end{enumerate}
\end{algorithm}

The inverse problem of identifying the doping profile in problem \eqref{2.u1}--\eqref{2.Ve} from the measurements of $\widehat{J}_n\cdot\nu$ at the contact $\Gamma_N$ reduces to the problem of identifying the parameter $\gamma(x)$ in \eqref{2.gamma} from measurements of the voltage-to-current map $\Sigma_{\text{VC}}$. This problem is well known as the conductivity inverse problem, which is highly ill-posed. In Figure \ref{fig.flowchart}, the forward and inverse problems are illustrated schematically. For the measurements, we have a finite number of measurements to identify $\gamma$, $(U_j,\Lambda_\gamma(U_j))_{j=1}^m$, where the applied voltages $U_j$ are assumed to be piecewise constant on the contacts, vanish on $\pa\Omega_N$, and are small in some norm.

\begin{figure}[ht]
\includegraphics[width=170mm]{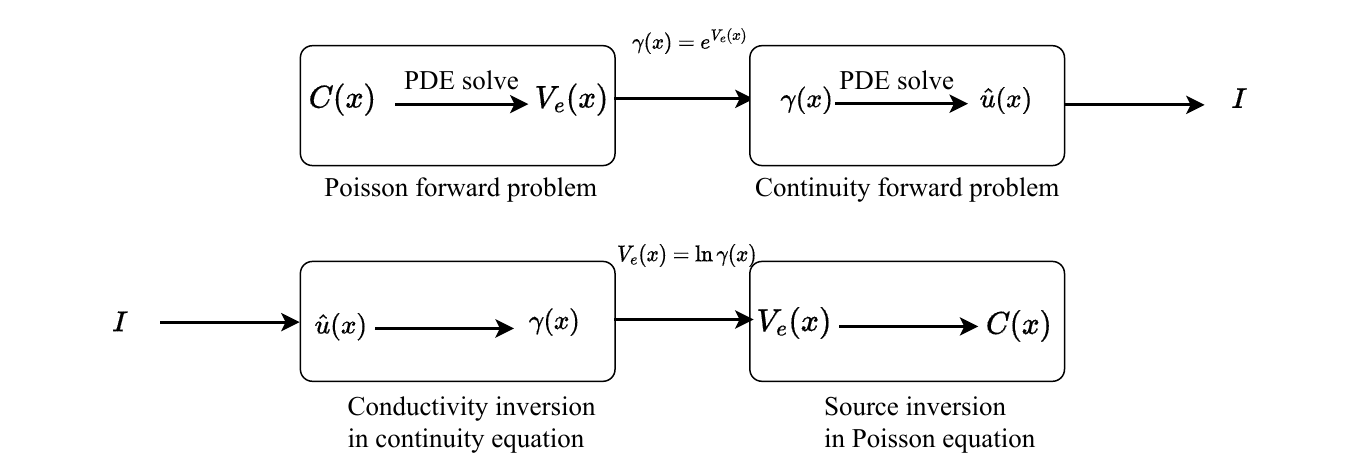}
\caption{Schematic diagrams of the forward (top) and inverse (bottom) problems for semiconductors with $I=\widehat{J}_n\cdot\nu|_{\Gamma_N}$.}
\label{fig.flowchart}
\end{figure}


\section{Semiconductor Bayesian inverse problem}\label{sec.bayes}

We review briefly inverse problems in the framework of statistical inversion theory and formulate the semiconductor inverse problem as a Bayesian inverse problem with the goal to recover the infinite-dimensional doping profile from the current density measurements, i.e., we first recover $\gamma$ from the inverse problem \eqref{2.gamma} and then compute the doping profile explicitly from \eqref{2.C}. 

To set up the Bayesian formulation, let $y=(y_1,\ldots,y_m)$ with $y_j=\Lambda_\gamma(U_j)$ for $j=1,\ldots,m$ be an observation vector of noisy pointwise data in the finite-dimensional observation space $Y:=\R^m$, let $\eta=(\eta_1,\ldots,\eta_m)$ be the vector of additive noise, and let $V_e$ be a parameter field in the parameter space $X:=L^2(\Omega)$ associated with the forward model \eqref{2.u1}--\eqref{2.Ve}. The observation model is defined by
\begin{align}\label{3.obmodel}
  y = G(V_e) + \eta, \quad \eta\sim\mathcal{N}(0,\Sigma),
\end{align}
where $\eta$ is a Gaussian noise with zero mean and (positive definite) covariance matrix $\Sigma\in\R^{m\times m}$. Furthermore, $G:X\to Y$, $G(V_e)=y$, is the so-called parameter-to-observable (forward) map, which is the composition of the parameter-to-state map $V_e\mapsto\widehat{u}$ and the state-to-observation map $\widehat{u}\mapsto y$. 

We recall that, in the semiconductor problem, the parameter-to-observable map $G$ calculates the normal trace $\widehat{J}_n\cdot\nu$ at certain points of the contact $\Gamma_N$, given the doping profile, and the noisy data $y$ is the same normal trace of the current density plus some Gaussian noise with zero mean and covariance $\Sigma$. 

In the Bayesian approach, we assume that the unknown $V_e$ is a random function (which becomes a parameter vector after discretization), and the observation vector $y$ and the observational noise vector $\eta$ are random variables. We assign a prior probability measure to the unknown parameter, expressing the a priori information. Then the overall uncertainty is reduced by conditioning the prior on the observational data, leading to the posterior measure. Therefore, the goal is to find a conditional posterior probability measure on the unknown parameter, given the data (likelihood), and a prior probability measure on the unknown parameter. We detail these measures in the next subsection.

\subsection{Likelihood, prior, and posterior distribution}\label{sec.lpp}

Since the observational noise is Gaussian, the negative log-likelihood function is defined by
\begin{align}\label{3.Phi}
  \Phi: X\times Y\to\R, \quad \Phi(V_e;y) = \frac12\|y-G(V_e)\|_\Sigma^2
  := \frac12\|\Sigma^{-1/2}(y-G(V_e))\|_2^2,
\end{align}
where we used the density of the noise with respect to the Lebesgue measure in $Y$ and $\|\cdot\|_2$ is the Euclidean norm. Hence, the likelihood function is given by
\begin{align}\label{3.like}
  \pi(y|V_e) = \exp\bigg(-\frac12\|y-G(V_e)\|_\Sigma^2\bigg).
\end{align}
It is well known \cite[Chap.~10, p.~140]{Van00} that, given a high-dimensional measurement space $Y=\R^m$ (i.e., $m$ is large enough) and a small measurement noise and assuming that the prior density is smooth and positive in a neighborhood of the true parameter values, the choice of the prior measure does not have a significant impact on the posterior measure, since the likelihood is informative enough. Unfortunately, this is not the case for the semiconductor inverse problem in the sense that the measurement data is not sufficiently large as they are collected from the surface electrode only. In fact, the number of data points is limited and our inverse problem is highly ill-posed. Thus, the choice of the prior is important, as it is expected to have a significant impact on the posterior distribution.

We use a Gaussian random field as the prior model. More precisely, we choose
\begin{align}\label{3.mu0}
  \mu_0 = \mathcal{N}(m_0,\Sigma_0),
\end{align}
where $m_0$ is the mean and $\Sigma_0:X\to X$ is the prior covariance, which is a symmetric, nonnegative, and trace-class operator. For details about $m_0$ and $\Sigma_0$, we refer to \cite{IbRo78}. Here, we take the Mat\'ern--Whittle covariance operator
\begin{align}\label{3.MW}
  \Sigma_0(x,y) = \sigma^2\frac{2^{1-\nu}}{\Gamma(\nu)}
  \bigg(\frac{d_0}{\ell}\bigg)^\nu K_\nu\bigg(\frac{d_0}{\ell}\bigg),
  \quad x,y\in\Omega,
\end{align}
where $d_0=\|x-y\|_2$, $\Gamma$ is the Gamma function, $K_\nu$ is the modified Bessel function, and the parameters $\sigma^2$, $\nu$, and $\ell$ denote the variance, smoothness, and spatial correlation length, respectively. The Mat\'ern--Whittle operator allows us to control the amplitude, smoothness, and correlation length of the generated samples.  We refer to Section \ref{sec.prior} for the discretization of the prior random field. 

We denote by $\pi_0(V_e)$ the prior density associated to the prior measure $\mu_0$, given by \eqref{3.mu0}, and recall the data likelihood $\pi(y|V_e)$, defined in \eqref{3.like}. If $V_e$ is from a finite-dimensional space, the posterior density $\pi(V_e|y)$, associated to the posterior measure $\mu^y$, is obtained from the Bayes rule
\begin{align*}
  \pi(V_e|y) \propto \pi(y|V_e)\pi_0(V_e).
\end{align*}
However, for infinite-dimensional spaces $X$ (as in our case), there is no density with respect to the Lebesgue measure. Therefore, the Bayes rule should be interpreted by means of the Radon--Nikod\'ym derivative
\cite[Sec.~2]{Stu10}
\begin{align}\label{3.bayes}
  \frac{\dd\mu^y}{\dd\mu_0}(V_e) = \frac{\exp(-\Phi(V_e;y))}{Z(y)},
  \quad Z(y) = \int_X\exp(-\Phi(V_e;y))\dd\mu_0(V_e).
\end{align}
The well-posedness of this problem is discussed next.


\subsection{Well-posedness of the Bayesian inverse problem}

We prove the well-posedness of the Bayesian inverse problem \eqref{2.gamma}--\eqref{2.C}. For this, we introduce the Hellinger distance, investigate the well-posedness of the posterior measure in the Hellinger distance, and define the well-posedness of the Bayesian inverse problem in the general setting before verifying the hypotheses for the semiconductor problem at hand.

Let $(X,\mathcal{A},\mu)$ be a measure space, consisting of a set $X$, a $\sigma$-algebra $\mathcal{A}$ on $X$, and a measure $\mu$ defined on $\mathcal{A}$. We introduce the space 
\begin{align*}
  \operatorname{meas}(X,\nu)=\{\mu\mbox{ measure on }X:\mu\ll\nu\},
\end{align*}
where $\mu\ll\nu$ means that $\mu$ is absolutely continuous with respect to $\nu$. The {\em Hellinger distance} between two probability measures $\mu_1$, $\mu_2\in\operatorname{meas}(X,\nu)$ is given by
\begin{align*}
  H(\mu_1,\mu_2) = \bigg\{\frac12\int_X\bigg(
  \sqrt{\frac{\dd\mu_1}{\dd\nu}}
  - \sqrt{\frac{\dd\mu_2}{\dd\nu}}\bigg)^2\dd\nu\bigg\}^{1/2}.
\end{align*}

\begin{definition}[Well-posedness in the Hellinger distance]\label{def.well}
Let $X$ be a Banach space. The Bayesian inverse problem \eqref{3.bayes}, for any prior measure $\mu_0$ and any log-likelihood function $\Phi$, is well-posed if the following two properties hold:
\begin{enumerate}
\item[\rm (i)] (Well-posedness) There exists a unique posterior probability measure $\mu^y\in\operatorname{meas}(X,$ $\mu_0)$ given by \eqref{3.bayes}.
\item[\rm (ii)] (Stability) The posterior measure $\mu^y$ is locally Lipschitz continuous with respect to $y$, i.e., for any $r>0$, there exists $C(r)>0$ such that for all $y_1$, $y_2\in Y$ with $\max\{\|y_1\|_Y,\|y_2\|_Y\}<r$, it holds that
\begin{align*}
  H(\mu^{y_1},\mu^{y_2}) \le C(r)\|y_1-y_2\|_Y.
\end{align*}
\end{enumerate}
\end{definition}

We recall the assumptions on the log-likelihood function $\Phi$ so that the corresponding Bayesian inverse problem is well-posed \cite[Assumption 2.6]{Stu10}. 

\begin{assumptions}\label{assump}
Let $X$ and $Y$ be Banach spaces. We suppose that $\Phi:X\times Y\to\R$ has the following properties:
\begin{enumerate}
\item[\rm (i)] (Lower bound) For any $\alpha>0$ and $r>0$, there exists $M=M(\alpha,r)\in\R$ such that for all $V_e\in X$ and $y\in Y$ with $\|y\|_Y<r$,
\begin{align*}
  \Phi(V_e;y) \ge M-\alpha\|V_e\|_X.
\end{align*} 
\item[\rm (ii)] (Upper bound) For any $r>0$, there exists $K(r)>0$ such that for all $V_e\in X$ and $y\in Y$ with $\max\{\|V_e\|_X,\|y\|_Y\}<r$,
\begin{align*}
  \Phi(V_e;y) \le K(r).
\end{align*}
\item[\rm (iii)] (Continuity in $y$) For any $r>0$, there exists $L(r)>0$ such that for all $V_e^1,V_e^2\in X$ and $y_1,y_2\in Y$ with $\max\{\|V_e^1\|_X,\|V_e^2\|_X,\|y\|_Y\}<r$,
\begin{align*}
  |\Phi(V_e^1;y)-\Phi(V_e^2;y)|\le L(r)\|V_e^1-V_e^2\|_X.
\end{align*}
\item[\rm (iv)] (Continuity in $y$) For any $\beta,r>0$, there exists $C=C(\beta,r)>0$ such that for all $V_e\in X$ and $y_1,y_2\in Y$ with $\max\{\|y_1\|_Y,\|y_2\|_Y\}<r$,
\begin{align*}
  |\Phi(V_e;y_1)-\Phi(V_e;y_2)|\le \exp\big(\beta\|V_e\|_X+C\big)\|y_1-y_2\|_Y.
\end{align*}
\end{enumerate}
\end{assumptions}

The following theorem, taken from \cite[Theorems 4.1--4.2]{Stu10}, shows that if Assumption \ref{assump} are satisfied, the Bayesian inverse problem is well-posed in the sense of Definition \ref{def.well}.

\begin{theorem}[Well-posedness and stability]
Let $X$ and $Y$ be Banach spaces and let $\mu_0$ be a Gaussian measure with $\mu_0(X)=1$. If $\Phi$ satisfies Assumption \ref{assump}, then there exists a unique posterior measure $\mu^y$ for \eqref{3.bayes} and $\mu^y$ is Lipschitz continuous with respect to $y$ in the Hellinger distance. 
\end{theorem}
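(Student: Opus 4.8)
Since the statement is a restatement of \cite[Theorems 4.1--4.2]{Stu10}, the natural proof is to reproduce Stuart's argument; I sketch its structure. Throughout, the one genuinely analytic input is that $\mu_0$ is a Gaussian probability measure on $X$, so that by Fernique's theorem the exponential moments $\int_X\exp(\alpha\|V_e\|_X)\dd\mu_0(V_e)$ are finite for every $\alpha>0$; this is what will make the $\exp(\beta\|V_e\|_X)$-type growth in Assumption \ref{assump}(iv) harmless, and it uses $\mu_0(X)=1$ in an essential way.

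First I would control the normalization constant $Z(y)$. Using the lower bound in Assumption \ref{assump}(i) with a fixed $\alpha>0$ gives $\exp(-\Phi(V_e;y))\le e^{-M}\exp(\alpha\|V_e\|_X)$, so $Z(y)\le e^{-M}\int_X\exp(\alpha\|V_e\|_X)\dd\mu_0<\infty$ by Fernique. For positivity, pick $r$ with $\mu_0(\{\|V_e\|_X<r\})>0$ (possible as $\mu_0$ is a probability measure); on that ball the upper bound in Assumption \ref{assump}(ii) yields $\exp(-\Phi(V_e;y))\ge e^{-K(r)}$, hence $Z(y)\ge e^{-K(r)}\mu_0(\{\|V_e\|_X<r\})>0$, and both bounds are uniform for $\|y\|_Y<r$. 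Since $V_e\mapsto\exp(-\Phi(V_e;y))$ is measurable (by the continuity in $V_e$ of Assumption \ref{assump}(iii)), the Radon--Nikod\'ym derivative in \eqref{3.bayes} defines a unique probability measure $\mu^y\ll\mu_0$, which is part (i) of Definition \ref{def.well}.

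Next I would estimate the Hellinger distance. Writing $\rho_i(V_e):=Z(y_i)^{-1/2}\exp(-\tfrac12\Phi(V_e;y_i))$, we have $2H(\mu^{y_1},\mu^{y_2})^2=\int_X(\rho_1-\rho_2)^2\dd\mu_0$, and the triangle inequality in $L^2(\mu_0)$ splits this into $Z(y_1)^{-1}\int_X(\exp(-\tfrac12\Phi(V_e;y_1))-\exp(-\tfrac12\Phi(V_e;y_2)))^2\dd\mu_0$ plus $|Z(y_1)^{-1/2}-Z(y_2)^{-1/2}|^2 Z(y_2)$. For the first term, since $\Phi\ge 0$ the map $t\mapsto e^{-t/2}$ on $[0,\infty)$ is $\tfrac12$-Lipschitz, so the integrand is at most $\tfrac14|\Phi(V_e;y_1)-\Phi(V_e;y_2)|^2\le\tfrac14\exp(2\beta\|V_e\|_X+2C)\|y_1-y_2\|_Y^2$ by Assumption \ref{assump}(iv); integrating, using Fernique, and dividing by the uniformly positive $Z(y_1)$ gives an $O(\|y_1-y_2\|_Y^2)$ bound. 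For the second term, the same Lipschitz estimate with Assumption \ref{assump}(iv) and Fernique gives $|Z(y_1)-Z(y_2)|\le C(r)\|y_1-y_2\|_Y$, and the uniform two-sided bounds on $Z(y_i)$ from the previous step (so that $z\mapsto z^{-1/2}$ is Lipschitz on the relevant range) again yield $O(\|y_1-y_2\|_Y^2)$. Adding the two contributions gives $H(\mu^{y_1},\mu^{y_2})\le C(r)\|y_1-y_2\|_Y$, which is part (ii).

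The main obstacle — and essentially the only nontrivial point — is the finiteness of the Gaussian exponential moments via Fernique's theorem, which is what keeps every integral above convergent despite the exponential growth in $\|V_e\|_X$ allowed by Assumption \ref{assump}(iv). The remaining ingredients are elementary: the Lipschitz bounds for $t\mapsto e^{-t/2}$ and $z\mapsto z^{-1/2}$, and the triangle inequality in $L^2(\mu_0)$.
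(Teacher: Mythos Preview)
The paper does not give its own proof of this theorem; it simply attributes the result to \cite[Theorems 4.1--4.2]{Stu10} and moves on. Your proposal is therefore not being compared against any argument in the paper, but it is a faithful sketch of Stuart's original proof: Fernique's theorem to control exponential moments under the Gaussian prior, the two-sided bound on the normalizing constant $Z(y)$ from Assumptions~\ref{assump}(i)--(ii), and the $L^2(\mu_0)$ triangle-inequality splitting of the Hellinger integrand. That is exactly the right structure.

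One small point deserves care. You write ``since $\Phi\ge 0$ the map $t\mapsto e^{-t/2}$ on $[0,\infty)$ is $\tfrac12$-Lipschitz'', but in the generality of the theorem as stated, Assumption~\ref{assump}(i) only guarantees $\Phi(V_e;y)\ge M-\alpha\|V_e\|_X$, which may be negative. The fix is standard and does not change the architecture: use the mean-value inequality $|e^{-\Phi_1/2}-e^{-\Phi_2/2}|\le\tfrac12\max_i e^{-\Phi_i/2}\,|\Phi_1-\Phi_2|$, bound $e^{-\Phi_i/2}\le e^{-M/2}\exp(\tfrac{\alpha}{2}\|V_e\|_X)$ via (i), and then combine with (iv) to obtain an integrand of order $\exp((\alpha+2\beta)\|V_e\|_X)\|y_1-y_2\|_Y^2$, which Fernique again integrates. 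In the specific setting of this paper, where $\Phi$ is the quadratic misfit \eqref{3.Phi} and hence nonnegative, your shortcut is of course valid.
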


If the observation space $Y$ is finite-dimensional and we assume a Gaussian noise in the observational model \eqref{3.obmodel}, the log-likelihood function is of the form \eqref{3.Phi}. In this case, which is our case as well (since $Y=\R^m$), the lower bound in Assumption \ref{assump} is satisfied with $M=\alpha=0$. Therefore, Assumption \ref{assump} can be reduced to the following two assumptions \cite[Theorem 4.1]{HoNi17}.

\begin{theorem}[Reduced assumptions for well-posedness]
Let $X$ be a Banach space with $\mu_0(X)=1$ and let $G:X\to\R^m$, defined in \eqref{3.obmodel}, satisfy:
\begin{enumerate}
\item[\rm (i)] For any $\eps>0$, there exists $M=M(\eps)>0$ such that for all $V_e\in X$,
\begin{align*}
  \|G(V_e)\|_\Sigma \le \exp(M+\eps\|V_e\|_X).
\end{align*}
\item[\rm (ii)] For any $r>0$, there exists $L(r)>0$ such that for all $V_e^1,V_e^2\in X$ with $\max\{\|V_e^1\|_X,$ $\|V_e^2\|_X\}<r$,
\begin{align*}
  \|G(V_e^1)-G(V_e^2)\|_\Sigma\le L(r)\|V_e^1-V_e^2\|_X.
\end{align*}
\end{enumerate}
Then $\Phi$, defined in \eqref{3.Phi}, satisfies Assumptions \ref{assump}. In particular, the Bayesian inverse problem \eqref{3.bayes} is well-posed in the sense of Definition \ref{def.well} with $Y=\R^m$ and $\|\cdot\|_Y=\|\cdot\|_\Sigma$. 
\end{theorem}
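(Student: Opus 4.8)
The plan is to show that the two conditions on a general map $G:X\to\R^m$ force the log-likelihood $\Phi$ of \eqref{3.Phi} to satisfy all four parts of Assumption \ref{assump}, after which well-posedness and stability follow from the previously stated Theorem (the one taken from \cite[Theorems 4.1--4.2]{Stu10}). So the content of this proof is entirely the abstract implication (i)--(ii) $\Rightarrow$ Assumption \ref{assump}; the conclusion about $\mu^y$ is then immediate by invoking that earlier theorem. Throughout I would write $\Phi(V_e;y)=\tfrac12\|y-G(V_e)\|_\Sigma^2$ and use repeatedly the elementary facts $\|a\|_\Sigma^2\ge 0$, the triangle inequality for $\|\cdot\|_\Sigma$, and the algebraic identity $\tfrac12(\|a\|_\Sigma^2-\|b\|_\Sigma^2)=\tfrac12(\|a\|_\Sigma-\|b\|_\Sigma)(\|a\|_\Sigma+\|b\|_\Sigma)$.

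For Assumption \ref{assump}(i), the lower bound, I would simply note that $\Phi\ge 0$ everywhere, so the bound $\Phi(V_e;y)\ge M-\alpha\|V_e\|_X$ holds trivially with $M=0$ and any $\alpha>0$; this matches the remark in the excerpt that in the Gaussian finite-dimensional case one may take $M=\alpha=0$. For part (ii), the upper bound, I would use the triangle inequality to write $\|y-G(V_e)\|_\Sigma\le\|y\|_\Sigma+\|G(V_e)\|_\Sigma$; on the region $\max\{\|V_e\|_X,\|y\|_Y\}<r$ the growth hypothesis (i) gives $\|G(V_e)\|_\Sigma\le\exp(M+\eps r)$ and $\|y\|_\Sigma=\|y\|_Y<r$, so $\Phi$ is bounded by a constant $K(r)$ depending only on $r$. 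For part (iii), continuity in $V_e$, I would factor the difference as above and bound it by $\tfrac12\big(\|G(V_e^1)-G(V_e^2)\|_\Sigma\big)\cdot\big(\|y-G(V_e^1)\|_\Sigma+\|y-G(V_e^2)\|_\Sigma\big)$; the first factor is controlled by the Lipschitz hypothesis (ii), and the second factor is bounded on the ball of radius $r$ by the same triangle-inequality/growth argument used in part (ii), yielding the desired constant $L(r)$.

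The one genuinely different estimate is Assumption \ref{assump}(iv), continuity in $y$, because here the constant is allowed to grow exponentially in $\|V_e\|_X$ with no upper bound imposed on $V_e$. I would again factor $|\Phi(V_e;y_1)-\Phi(V_e;y_2)|\le\tfrac12\|y_1-y_2\|_\Sigma\big(\|y_1-G(V_e)\|_\Sigma+\|y_2-G(V_e)\|_\Sigma\big)$, and then estimate the bracket by $\|y_1\|_\Sigma+\|y_2\|_\Sigma+2\|G(V_e)\|_\Sigma\le 2r+2\exp(M+\tfrac{\beta}{2}\|V_e\|_X)$ using hypothesis (i) with the free parameter $\eps$ chosen equal to $\beta/2$. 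Absorbing the polynomial-in-$r$ terms into the exponential (using $a+b\le e^a\,e^{\log(1+b)}$-type bookkeeping, or simply $2r+2e^{M+\frac{\beta}{2}t}\le\exp(\beta t+C)$ for $C=C(\beta,r)$ large enough) produces the required bound $|\Phi(V_e;y_1)-\Phi(V_e;y_2)|\le\exp(\beta\|V_e\|_X+C)\|y_1-y_2\|_Y$, after noting $\|y_1-y_2\|_\Sigma=\|y_1-y_2\|_Y$.

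The main obstacle I anticipate is precisely the bookkeeping in part (iv): one must match the \emph{subexponential} growth rate $\eps$ allowed in hypothesis (i) to the \emph{arbitrary} exponential rate $\beta$ demanded in Assumption \ref{assump}(iv), and then fold the $r$-dependent additive terms into a single constant $C(\beta,r)$ inside the exponential without spoiling the linear dependence on $\|y_1-y_2\|_Y$. The key trick is that hypothesis (i) holds for \emph{every} $\eps>0$, so setting $\eps=\beta/2$ (any fixed fraction of $\beta$ works) gives room to absorb the remaining constants. Once all four parts are verified, I would close the argument in a single sentence: by the earlier Theorem the Bayesian problem \eqref{3.bayes} has a unique posterior $\mu^y$ that is locally Lipschitz in $y$ in the Hellinger distance, i.e.\ it is well-posed in the sense of Definition \ref{def.well} with $Y=\R^m$ and $\|\cdot\|_Y=\|\cdot\|_\Sigma$.
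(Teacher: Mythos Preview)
Your argument is correct: verifying the four parts of Assumption~\ref{assump} from the two hypotheses on $G$ proceeds exactly as you outline, and the key observation for part (iv)---that the freedom to choose $\eps$ arbitrarily small in hypothesis (i) lets one match any prescribed exponential rate $\beta$---is the right idea. Note, however, that the paper does not supply its own proof of this theorem at all; it is simply quoted from \cite[Theorem~4.1]{HoNi17}, so there is no paper proof to compare against. Your write-up therefore reconstructs what the cited reference does, and the approach you give is the standard one.
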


As only the continuity equation \eqref{2.u1} is directly involved in the Bayesian inverse problem for semiconductors, the following well-posedness theorem is established on this equation.

\begin{theorem}
Consider the decoupled semiconductor problem \eqref{2.u1}--\eqref{2.Ve} with Dirichlet condition $\widehat{u}_D=U\in H^{3/2}(\pa\Omega_D)$ on $\pa\Omega_D$. Then there exist a constant $C_D>0$ only depending on the Dirichlet boundary data such that
\begin{align*}
  \|\widehat{u}\|_{H^1(\Omega)} 
  \le C_D\exp\big({2}\|V_e\|_{L^\infty(\Omega)}\big).
\end{align*}
Furthermore, let $V_e^1$, $V_e^2$ be two bounded weak solutions to \eqref{2.Ve} and $\widehat{u}_1$, $\widehat{u}_2$ the corresponding weak solutions to \eqref{2.u1}--\eqref{2.u2}. Then
\begin{align*}
  \|\widehat{u}_1-\widehat{u}_2\|_{H^1(\Omega)}
  \le C_D\exp\big({4}\max\{\|V_e^1\|_{L^\infty(\Omega)},
  \|V_e^2\|_{L^\infty(\Omega)}\}\big)\|V_e^1-V_e^2\|_{L^\infty(\Omega)}.
\end{align*}
\end{theorem}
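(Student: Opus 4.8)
The plan is to work entirely with the weak formulation of the continuity equation \eqref{2.u1}--\eqref{2.u2}. Setting $\gamma=e^{V_e}$ as in Algorithm \ref{algo1}, the solution $\widehat{u}\in H^1(\Omega)$ with trace $U$ on $\pa\Omega_D$ is characterized by $\int_\Omega\gamma\na\widehat{u}\cdot\na\phi\,\dd x=0$ for all $\phi$ in the space $H^1_{0,D}(\Omega)$ of $H^1(\Omega)$-functions vanishing on $\pa\Omega_D$, the homogeneous Neumann condition on $\pa\Omega_N$ being natural. Two ingredients make the argument quantitative in $\|V_e\|_{L^\infty(\Omega)}$: the two-sided bound $e^{-\|V_e\|_{L^\infty(\Omega)}}\le\gamma\le e^{\|V_e\|_{L^\infty(\Omega)}}$, which yields uniform ellipticity with explicit constants, and the Poincaré inequality $\|\phi\|_{H^1(\Omega)}\le C_P\|\na\phi\|_{L^2(\Omega)}$ on $H^1_{0,D}(\Omega)$, valid since $\pa\Omega_D$ has positive surface measure.

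For the first estimate, I would fix a bounded extension $g\in H^1(\Omega)$ of the Dirichlet datum $U$ with $\|g\|_{H^1(\Omega)}\le C\|U\|_{H^{3/2}(\pa\Omega_D)}$ and write $\widehat{u}=w+g$ with $w\in H^1_{0,D}(\Omega)$. Testing the weak form of $\diver(\gamma\na w)=-\diver(\gamma\na g)$ with $\phi=w$ and using the two-sided bound on $\gamma$ gives $e^{-\|V_e\|_{L^\infty(\Omega)}}\|\na w\|_{L^2(\Omega)}^2\le e^{\|V_e\|_{L^\infty(\Omega)}}\|\na g\|_{L^2(\Omega)}\|\na w\|_{L^2(\Omega)}$, hence $\|\na w\|_{L^2(\Omega)}\le e^{2\|V_e\|_{L^\infty(\Omega)}}\|\na g\|_{L^2(\Omega)}$. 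Applying Poincaré and adding back $g$ (using $e^{2\|V_e\|_{L^\infty(\Omega)}}\ge1$ to absorb the extension term) yields the claimed bound with $C_D$ depending only on $\|U\|_{H^{3/2}(\pa\Omega_D)}$ and $C_P$.

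For the Lipschitz estimate I would set $d=\widehat{u}_1-\widehat{u}_2\in H^1_{0,D}(\Omega)$, since the Dirichlet data cancel, and subtract the two weak formulations. Splitting $e^{V_e^1}\na\widehat{u}_1-e^{V_e^2}\na\widehat{u}_2=e^{V_e^1}\na d+(e^{V_e^1}-e^{V_e^2})\na\widehat{u}_2$ and testing with $\phi=d$ gives $\int_\Omega e^{V_e^1}|\na d|^2\,\dd x=-\int_\Omega(e^{V_e^1}-e^{V_e^2})\na\widehat{u}_2\cdot\na d\,\dd x$. Using the elementary bound $|e^{a}-e^{b}|\le e^{\max\{|a|,|b|\}}|a-b|$ together with the ellipticity bounds, and writing $M=\max\{\|V_e^1\|_{L^\infty(\Omega)},\|V_e^2\|_{L^\infty(\Omega)}\}$, one obtains $\|\na d\|_{L^2(\Omega)}\le e^{2M}\|V_e^1-V_e^2\|_{L^\infty(\Omega)}\|\na\widehat{u}_2\|_{L^2(\Omega)}$. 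Inserting the first estimate, $\|\widehat{u}_2\|_{H^1(\Omega)}\le C_D e^{2\|V_e^2\|_{L^\infty(\Omega)}}\le C_D e^{2M}$, and Poincaré once more produces $\|d\|_{H^1(\Omega)}\le C_D e^{4M}\|V_e^1-V_e^2\|_{L^\infty(\Omega)}$, which is the assertion.

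I do not expect a genuine obstacle: both bounds are standard energy estimates made explicit in $\|V_e\|_{L^\infty(\Omega)}$ via the two-sided bound on $e^{V_e}$. The only points requiring care are the mixed boundary conditions — one must test only against functions vanishing on $\pa\Omega_D$ and confirm that the Poincaré inequality holds on this subspace — and the existence of a bounded $H^1$-extension of the $H^{3/2}(\pa\Omega_D)$ Dirichlet datum, both of which are routine for the diode geometry of Figure \ref{fig.diode}.
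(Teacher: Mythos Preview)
Your proposal is correct and follows essentially the same approach as the paper: both parts are energy estimates obtained by testing the weak formulation with $\widehat{u}-\widehat{u}_D$ (respectively $\widehat{u}_1-\widehat{u}_2$), using the two-sided bound $e^{-\|V_e\|_{L^\infty}}\le e^{V_e}\le e^{\|V_e\|_{L^\infty}}$ and Poincar\'e's inequality, and invoking the mean-value theorem for the exponential together with the first estimate to close the Lipschitz bound. The only cosmetic difference is that the paper constructs the Dirichlet lift as a harmonic extension (to also secure $L^\infty$ via the maximum principle), whereas you use a generic bounded $H^1$-extension; either works here.
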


\begin{proof} 
According to \cite[Theorem 4.10 (i)]{McL00}, there exists a Dirichlet lift $\bar{U}\in H^1(\Omega)$ of $U$ satisfying $\Delta\bar{U}=0$ in $\Omega$, $\bar{U}=U$ on $\pa\Omega_D$, and $\na\bar{U}\cdot\nu=0$ on $\pa\Omega_N$. Since $U\in H^{3/2}(\pa\Omega_D)\hookrightarrow L^\infty(\pa\Omega_D)$, we can apply the maximum principle to infer that $\bar{U}\in L^\infty(\Omega)$. Consequently, we can extend $\widehat{u}_D=U\in L^\infty(\Omega)\cap H^1(\Omega)$. We use $\widehat{u}-\widehat{u}_D\in H^1(\Omega)$ as an admissible test function in the weak formulation of \eqref{2.u1}--\eqref{2.u2}:
{\begin{align*}
  0 = \int_\Omega e^{V_e}\na\widehat{u}
  \cdot\na(\widehat{u}-\widehat{u}_D)\dd x 
  = \int_\Omega e^{V_e}|\na(\widehat{u}-\widehat{u}_D)|^2\dd x
  + \int_\Omega e^{V_e}\na\widehat{u}_D
  \cdot\na(\widehat{u}-\widehat{u}_D)\dd x. 
\end{align*}
This shows that
\begin{align*}
  \exp&(-\|V_e\|_{L^\infty(\Omega)})
  \|\na(\widehat{u}-\widehat{u}_D)\|_{L^2(\Omega)}^2
  \le \int_\Omega e^{V_e}|\na(\widehat{u}-\widehat{u}_D)|^2\dd x \\
  &= -\int_\Omega e^{V_e}\na\widehat{u}_D
  \cdot\na(\widehat{u}-\widehat{u}_D)\dd x
  \le \exp(\|V_e\|_{L^\infty(\Omega)})\|\widehat{u}_D\|_{L^2(\Omega)}
  \|\na(\widehat{u}-\widehat{u}_D)\|_{L^2(\Omega)},
\end{align*}
and, after division by $\|\na(\widehat{u}-\widehat{u}_D)\|_{L^2(\Omega)}$,
\begin{align*}
  \|\na(\widehat{u}-\widehat{u}_D)\|_{L^2(\Omega)}
  \le \exp(2\|V_e\|_{L^\infty(\Omega)})\|\widehat{u}_D\|_{L^2(\Omega)}.
\end{align*}
Poincar\'e's inequality then shows the first estimate.
}

For the second estimate, we use the test function $\widehat{u}_1-\widehat{u}_2$ in the weak formulation of $\diver(e^{V_e^1}\na \widehat{u}_1-e^{V_e^2}\na\widehat{u}_2)=0$ and apply H\"older's inequality:
\begin{align*}
  \int_\Omega e^{V_e^1}|\na(\widehat{u}_1-\widehat{u}_2)|^2\dd x
  &= \int_\Omega(e^{V_e^1}-e^{V_e^2})\na\widehat{u}_2\cdot
  \na(\widehat{u}_1-\widehat{u}_2)\dd x \\
  &\le \|e^{V_e^1}-e^{V_e^2}\|_{L^\infty(\Omega)}
  \|\na\widehat{u}_2\|_{L^2(\Omega)}
  \|\na(\widehat{u}_1-\widehat{u}_2)\|_{L^2(\Omega)}.
\end{align*}
Consequently, by the mean-value theorem,
\begin{align*}
  \exp&(-\|V_e^1\|_{L^\infty(\Omega)})
  \|\na(\widehat{u}_1-\widehat{u}_2)\|_{L^2(\Omega)}
  \le \|e^{V_e^1}-e^{V_e^2}\|_{L^\infty(\Omega)}
  \|\na\widehat{u}_2\|_{L^2(\Omega)} \\
  &\le \exp\big(\max\{\|V_e^1\|_{L^\infty(\Omega)},
  \|V_e^2\|_{L^\infty(\Omega)}\}\big)\|V_e^1-V_e^2\|_{L^\infty(\Omega)}
  \|\na\widehat{u}_2\|_{L^2(\Omega)}.
\end{align*}
We know from the first part of the proof that $\|\na\widehat{u}_2\|_{L^2(\Omega)}\le C\exp({2}\|V_e^2\|_{L^\infty(\Omega)})$. Hence,
\begin{align*}
  \|\na(\widehat{u}_1-\widehat{u}_2)\|_{L^2(\Omega)}
  \le C\exp\big({4}\max\{\|V_e^1\|_{L^\infty(\Omega)},
  \|V_e^2\|_{L^\infty(\Omega)}\}\big)\|V_e^1-V_e^2\|_{L^\infty(\Omega)},
\end{align*}
and we conclude the proof after an application of Poincar\'e's inequality.
\end{proof}


\section{Preconditioned Crank--Nicolson Markov chain Monte--Carlo method}
\label{sec.MCMC}

We reconstruct the doping profile from pointwise noisy measurements of the current density using Bayesian inversion. For this, we use a high-dimensional Markov chain Monte--Carlo (MCMC) method to estimate the mean posterior of the unknown function $\gamma=\exp(V_e)$ and then the doping profile from $\gamma$ by equation \eqref{2.C}. 

\subsection{Markov chain Monte--Carlo methods}

In Bayesian inversion, the quantity of interest is uncertain. Thus, the goal is to reduce the uncertainty by updating the prior probability distribution of the unknowns with the help of available observations.  According to the Bayes theorem, this is done by conditioning the prior distribution on the data, which leads to the posterior distribution of the unknown parameters. Usually, sampling directly from the posterior distribution is not possible especially in high dimensions, and therefore, numerical methods like the Metropolis--Hastings MCMC algorithm are used. The idea is to define a Markov chain over possible values in such a way that the stationary distribution of the Markov chain yields the desired distribution. The construction is iterative, and in each iteration of the Markov chain, a new sample, according to a proposal distribution (e.g.\ a random walk), is proposed. Then, by comparing the likelihood of the new sample to that one of the old sample, the algorithm decides to reject or accept the new sample to leverage the parameter inference. The acceptance probability of the proposed states in the standard Metropolis--Hastings algorithm is defined by
\begin{align}\label{3.alpha}
  \alpha(\widetilde{V}_e^{(i+1)},V_e^{(i)})
  = \min\bigg\{1,\frac{\pi(y|\widetilde{V}_e^{(i+1)})
  \pi_0(\widetilde{V}_e^{(i+1)})q(V_e^{(i)}|\widetilde{V}_e^{(i+1)})}{
  \pi(y|V_e^{(i)})\pi_0(V_e^{(i)})q(\widetilde{V}_e^{(i+1)}|
  V_e^{(i)})}\bigg\},
\end{align}
where the proposed parameter in the basic random-walk Metropolis--Hastings algorithm is given by
\begin{align*}
  \widetilde{V}_e^{(i+1)} = V_e^{(i)} + \beta\xi^{(i)}, \quad
  \xi^{(i)}\sim\mathcal{N}(0,\mathbb{I}),
\end{align*}
where $\mathbb{I}$ is the unit matrix, and the proposal distribution satisfies $q(\widetilde{V}_e^{(i+1)}|V_e^{(i)})\sim\mathcal{N}(V_e^{(i)},$ $\beta^2\mathbb{I})$. This distribution is symmetric in the sense
\begin{align*}
  q(\widetilde{V}_e^{(i+1)}|V_e^{(i)}) = q(V_e^{(i)}|
  \widetilde{V}_e^{(i+1)}),
\end{align*}
which simplifies expression \eqref{3.alpha} of the acceptance probability. 

The parameter $\beta>0$ is the step size for the parameter space and needs to be tuned. If $\beta$ is too small, the parameter space is not explored efficiently, while if it is too large, the proposed states are mostly rejected. Both cases lead to large asymptotic variances and therefore to a smaller accuracy of the sampling. According to \cite[Theorem 3.1]{GRG96}, there is a heuristic strategy to find the approximately optimal Metropolis algorithm, in the sense that the scale of the jumping (proposal) density is chosen in such a way that the average acceptance rate of the algorithm is roughly 0.234. This leads to a small asymptotic variance, and to this end, the parameter $\beta$ should be proportional to the inverse of the dimension of the parameter space (see \cite[Sec.~1]{RGG97} or \cite[Sec.~6.3]{CRSW13}). The challenge here is to find a suitable tuning parameter in the infinite-dimensional setting, as the acceptance probability tends to zero and thus, the asymptotic variance of the samples goes to infinity \cite[Sec.~1.1]{CRSW13}.

To tackle this issue, one way is to design the MCMC algorithm with well-defined proposal distributions such that the asymptotic variance of the generated samples is independent of the dimension of the parameter space. This is realized by the preconditioned Crank--Nicolson MCMC method for functions, which, in contrast to the random-walk Metropolis--Hastings algorithm, does not depend on the dimension of the parameter space.

The idea of the preconditioned Crank--Nicolson MCMC method is to make it applicable for functions. This is realized by using proposal distributions based on time-discretizations of stochastic dynamical systems, which preserve the Gaussian reference measure \cite{CRSW13}. The choice of the proposal density is important for two reasons: First, it should be easy to sample from it and second, it should balance the sampling accuracy and cost. Accuracy in sampling requires a small asymptotic variance and a reduction of correlation among the samples, since this leads to a larger effective sample size. On the other hand, sampling costs depend on the choice of the proposal density, and one could involve the gradient information of probability densities as well, like it is done in the Metropolis-Adjusted Langevin Algorithm \cite{Bes94}. 

The preconditioned Crank--Nicolson algorithm is an approach to construct the proposal function in the MCMC method. This proposal is independent of the dimension of the parameter space and therefore suitable in an infinite-dimensional setting. Furthermore, the preconditioned Crank--Nicolson algorithm is $\pi_0$-reversible, i.e.\ $\pi_0(\widetilde{V}_e^{(i+1)})q(V_e^{(i)}|\widetilde{V}_e^{(i+1)})
= \pi_0(V_e^{(i)})q(\widetilde{V}_e^{(i+1)}|V_e^{(i)})$. This implies, because of \eqref{3.alpha}, that
\begin{align*}
  \alpha(\widetilde{V}_e^{(i+1)},V_e^{(i)})
  = \min\big\{1,\exp\big(\Phi(V_e^{(i)},y)
  - \Phi(\widetilde{V}_e^{(i+1)})\big)\big\}
  = \min\bigg\{1,\frac{\pi(y|\widetilde{V}_e^{(i+1)})}{
  \pi(y|V_e^{(i)})}\bigg\},
\end{align*}
which means that the acceptance probability only depends on the likelihood function \cite{CRSW13}. Note that the acceptance probability is different from that one of the random-walk proposal and that the acceptance probability of the preconditioned Crank--Nicolson proposal is simpler and likelihood-informed. Moreover, the proposed state is accepted with probability (almost) one if the log-likelihood function $\Phi$ (almost) vanishes. Algorithm \ref{algo2} displays a pseudocode of the preconditioned Crank--Nicolson MCMC algorithm. 

\begin{algorithm}
\caption{Preconditioned Crank--Nicolson MCMC algorithm}
\label{algo2}
\begin{itemize}
\item{\bf Input:} prior measure $\mu_0(V_e)$ with $V_e\sim\mathcal{N}(m_0,\Sigma_0)$; likelihood function $\pi(y|V_e)$.
\item {\bf Output:} samples $V_e^{(i)}$ from the posterior measure $\mu^y$.
\end{itemize}
\begin{enumerate}
\item[(i)] Set $i=0$ and draw $V_e^{(0)}$ from prior, $V_e^{(0)}\sim\mathcal{N}(m_0,\Sigma_0)$.
\item[(ii)] For $i=1,\ldots,N$:
\begin{enumerate}
\item[(a)] Propose 
\begin{align*}
  \widetilde{V}_e^{(i+1)}
  = \sqrt{1-\beta^2}(V_e^{(i)}-m_0) + \beta\xi^{(i)} + m_0
  \quad\mbox{with }\xi^{(i)}\sim\mathcal{N}(0,\Sigma_0)
\end{align*}
according to the proposal distribution
\begin{align*}
  q(\widetilde{V}_e^{(i+1)}|V_e^{(i)})
  = \mathcal{N}\big(\sqrt{1-\beta^2}(V_e^{(i)}-m_0) 
  + m_0,\beta^2\Sigma_0\big).
\end{align*}
\item[(b)] Compute the acceptance probability
\begin{align*}
  \alpha:=\alpha(\widetilde{V}_e^{(i+1)},V_e^{(i)})
  = \min\bigg\{1,\frac{\pi(y|\widetilde{V}_e^{(i+1)})}{
  \pi(y|V_e^{(i)})}\bigg\}
\end{align*}
with the log-likelihood function \eqref{3.like}.
\item[(c)] Draw $r\sim U(0,1)$; if $r\le\alpha$ then $V_e^{(i+1)}:=\widetilde{V}_e^{(i+1)}$, else $V_e^{(i+1)}:=V_e^{(i)}$.
\end{enumerate}
\end{enumerate}
\end{algorithm}


\subsection{Infinite-dimensional setting: prior sampling}\label{sec.prior}

In the semiconductor inverse problem, the parameter space $X=L^2(\Omega)$ with $\Omega\subset\R^2$ is infinite dimensional. We discretize it by $n=n_x\times n_y$ parameters, where $n_x$ and $n_y$ are the numbers of parameters in the $x$ and $y$ direction, respectively. 

As mentioned in Section \ref{sec.lpp}, we assume a Gaussian prior random field. The spectral theorem for self-adjoint operators on Hilbert spaces provides a decomposition of Gaussian measures in Hilbert spaces, the so-called Karhunen--Lo\`eve expansion. The Karhunen--Lo\`eve theorem states that that a stochastic process can be represented as an infinite linear combination of orthogonal functions. The coefficients are stochastically independent standard random variables, and the orthogonal basis functions are determined by the spectral decomposition of the covariance function of the stochastic process. The Karhunen--Lo\`eve expansion of the Gaussian random field $V_e\sim\mathcal{N}(m_0,\Sigma_0)$ is defined by
\begin{align}\label{3.KL}
  V_e(x)= m_0(x) + \sum_{i=1}^{\infty} \sqrt{\lambda_i}\xi_i\phi_i(x),
  \quad \xi_i\sim\mathcal{N}(0,\mathbb{I}),
\end{align}
where $(\lambda_i,\phi_i)_{i=1}^\infty$ are the eigenpairs of the covariance operator with kernel function $\Sigma_0$, given by \eqref{3.MW}.

By discretizing the prior random field, the corresponding covariance function is reduced to a covariance matrix of order $n\times n$. For the sampling from the Gaussian prior random field in the semiconductor inverse problem, we use a truncated Karhunen--Lo\`eve expansion of $V_e$ with $n$ terms. This truncation yields an approximation of $V_e$ in terms of $n$ standard normal random variables, thus reducing the infinite-dimensional to a finite-dimensional stochastic parameter space.

As mentioned in Section \ref{sec.lpp}, we choose the Mat\'ern--Whittle covariance operator for $\Sigma_0$. It represents the covariance between two measurements as a function of the distance between the points at which they are taken. Furthermore, a Mat\'ern-type operator is stationary as it only depends on distances between points, and it is isotropic as the distance is Euclidean. 


\section{Numerical results}\label{sec.num}

The semiconductor domain is chosen as the square $\Omega=(-1,1)^2\subset\R^2$ with two Ohmic contacts; see Figure \ref{fig.diode}. The forward model is defined by the stationary linearized unipolar drift-diffusion equations \eqref{2.u1}--\eqref{2.Ve}.  The true (scaled) doping profile is the piecewise constant function
\begin{align*}
  C(x) = \begin{cases}
  -2 &\quad\mbox{for }x\in\Omega_P, \\
  1 &\quad\mbox{for }x\in\Omega_N,
  \end{cases}
\end{align*}
the (scaled) built-in potential is chosen as $V_{\text{bi}}=0.6$, and we take $\delta=\lambda=1$. For a given doping profile, we first solve the Poisson equation \eqref{2.Ve} and then the continuity equation \eqref{2.u1}--\eqref{2.u2}, using the applied voltage $U=2$ on the contact $\Gamma_P$. We have chosen continuous piecewise linear finite elements on a mesh of $20\times 20$ squares for the numerical discretization of the PDEs. Figure \ref{fig1} illustrates the true doping profile and the solutions to \eqref{2.Ve} and \eqref{2.u1}--\eqref{2.u2}. For the likelihood function, which is needed to compute the acceptance probability in Algorithm 2, we have chosen 1\% noise, i.e.\ $\Sigma=0.01\cdot\mathbb{I}$ (see \eqref{3.obmodel}).

\begin{figure}[ht]
\centering
\includegraphics[width=53mm,height=50mm]{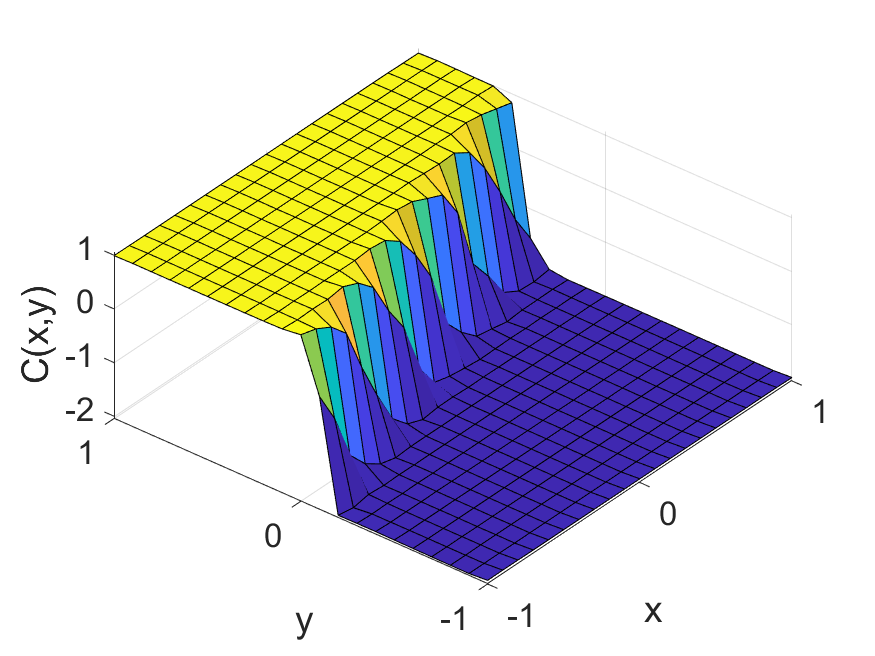} 
\includegraphics[width=53mm,height=50mm]{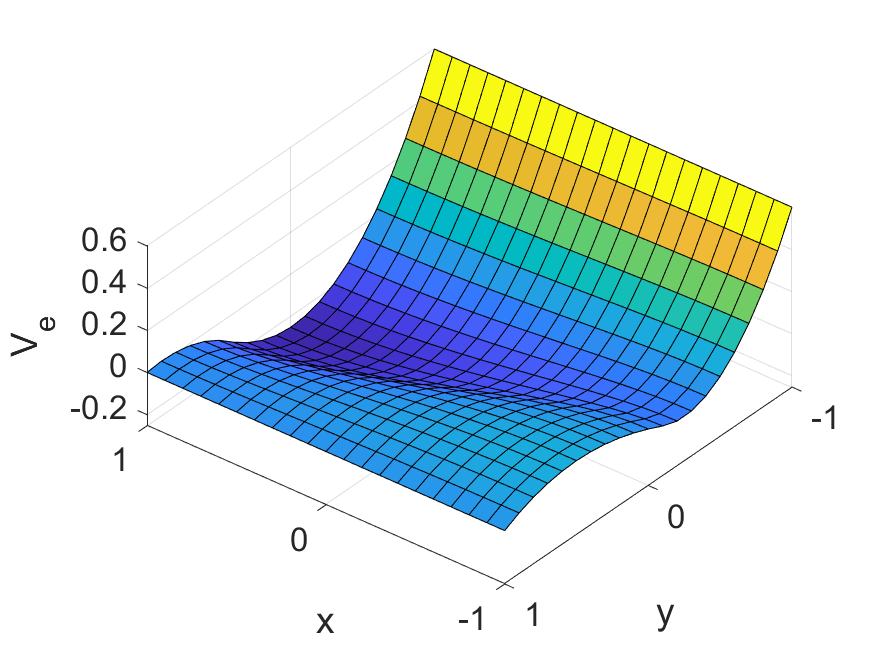}
\includegraphics[width=53mm,height=50mm]{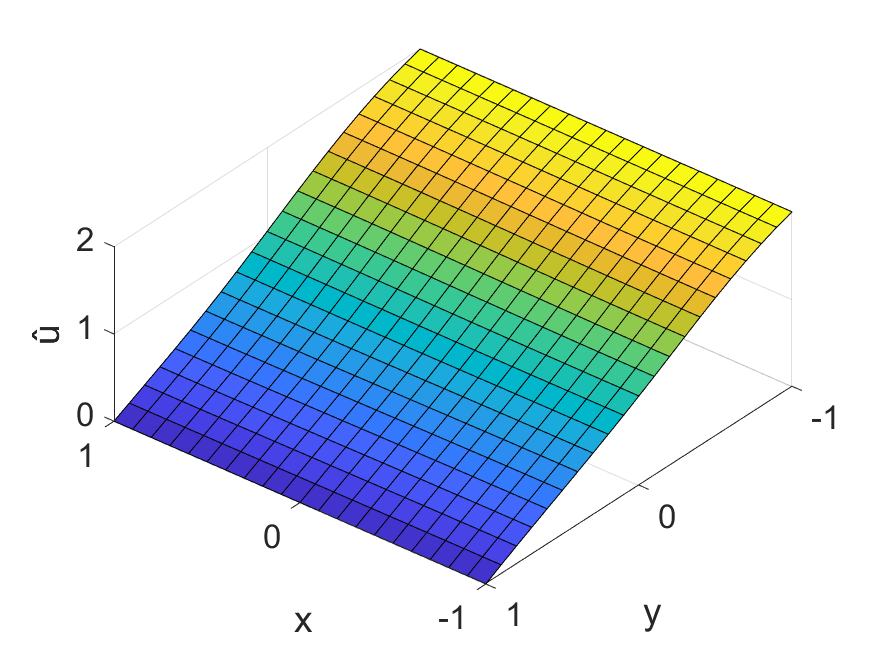}
\caption{True doping profile (left), solution $V_e$ to the Poisson equation (middle), and solution $\widehat{u}$ to the continuity equation (right).}
\label{fig1}
\end{figure}

We reconstruct the doping profile from the current density measurements as follows. First, we estimate the posterior distribution of $V_e$ using Algorithm \ref{algo2}. To this end, we compute the synthetic truth from the Poisson equation using the true doping profile and $V_{\text{bi}}=0.6$. Figure \ref{fig2} (left) shows the true parameter field $V_e$ with the location of 21 current density measurement points on the top contact $\Gamma_N$ (black dots). Here, we assume a Gaussian prior field with the Mat\'ern--Whittle covariance operator \eqref{3.MW}, where the parameters are $\sigma^2 = 0.01$, $\nu=1$, $\lambda=0.7$. The prior mean $m_0$ is taken as a perturbation of the true $V_e$, and the prior covariance operator $\Sigma_0$ (see Algorithm 2) becomes after discretization a matrix of dimension $441\times 441$. Then we discretize the Gaussian random field by using the truncated Karhunen--Lo\'eve expansion (with $N_{KL}=441$ terms) from \eqref{3.KL} in the discretized parameter domain with $n=20\times 20$ parameters. Figure \ref{fig2} (middle and right) illustrate, respectively, a realization of the Gaussian prior random field and the reconstructed parameter field by the estimation of the posterior using the preconditioned Crank--Nicolson MCMC algorithm. Here, we assume a preconditioned Crank--Nicolson proposal with jumping step size $\beta=0.2$ and $10^5$ samples. The posterior path plots and the corresponding histogram of posterior probability distribution functions of three parameters are displayed in Figure \ref{fig3}.

\begin{figure}[ht]
\centering
\includegraphics[width=53mm,height=50mm]{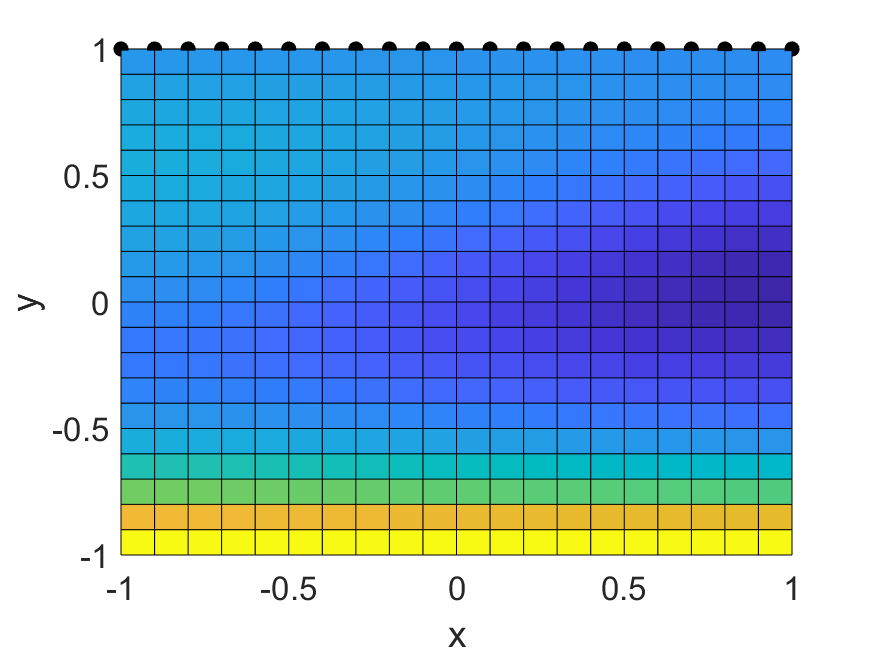}
\includegraphics[width=53mm,height=50mm]{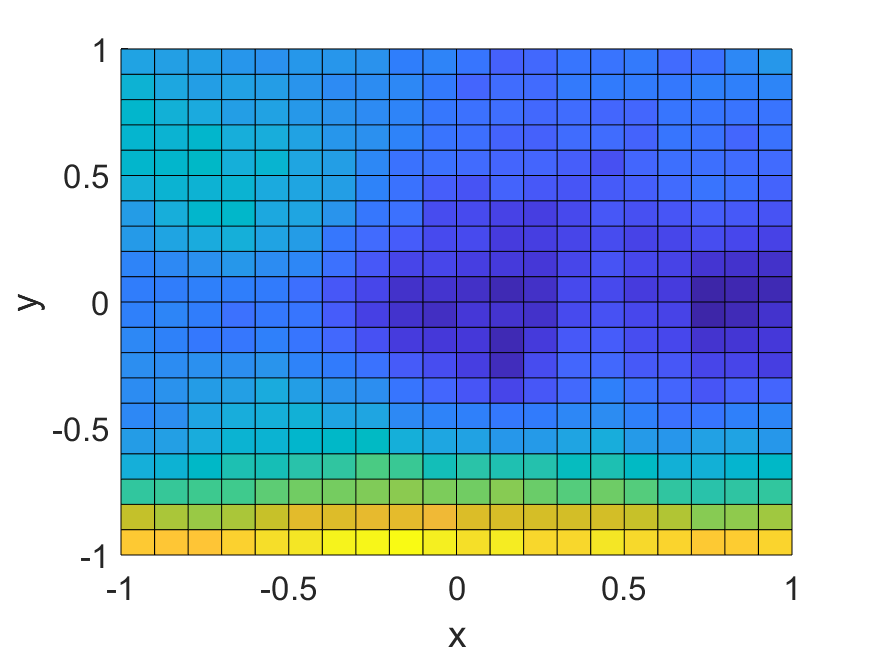}
\includegraphics[width=53mm,height=50mm]{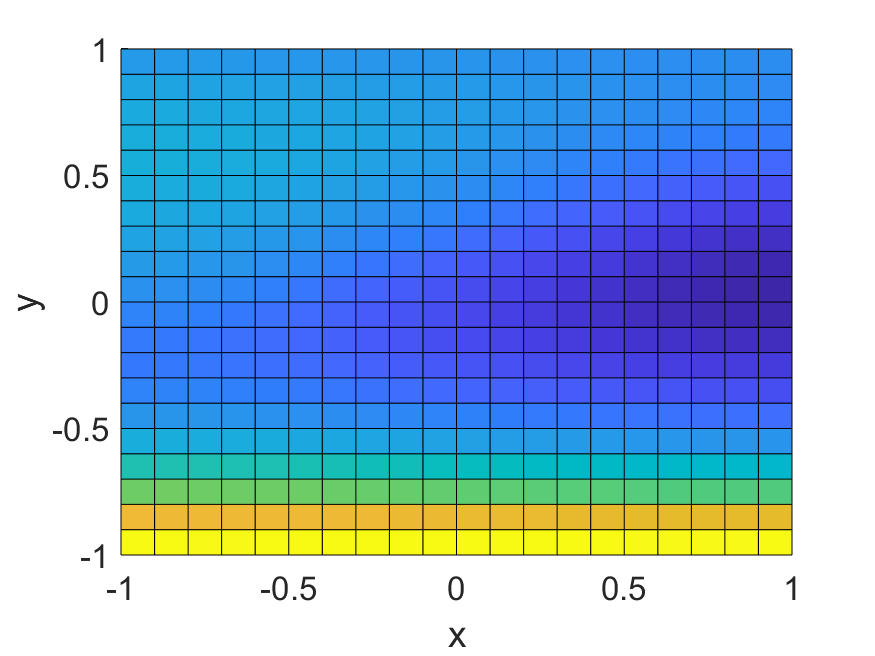}
\caption{True parameter field $V_e$ (synthetic truth) with the locations of the measurement points as black dots (left), a sample from the prior field (middle), and the reconstructed $V_e$ (right).}
\label{fig2}
\end{figure}

\begin{figure}[ht]
\centering	
\includegraphics[width=115mm,height=100mm]{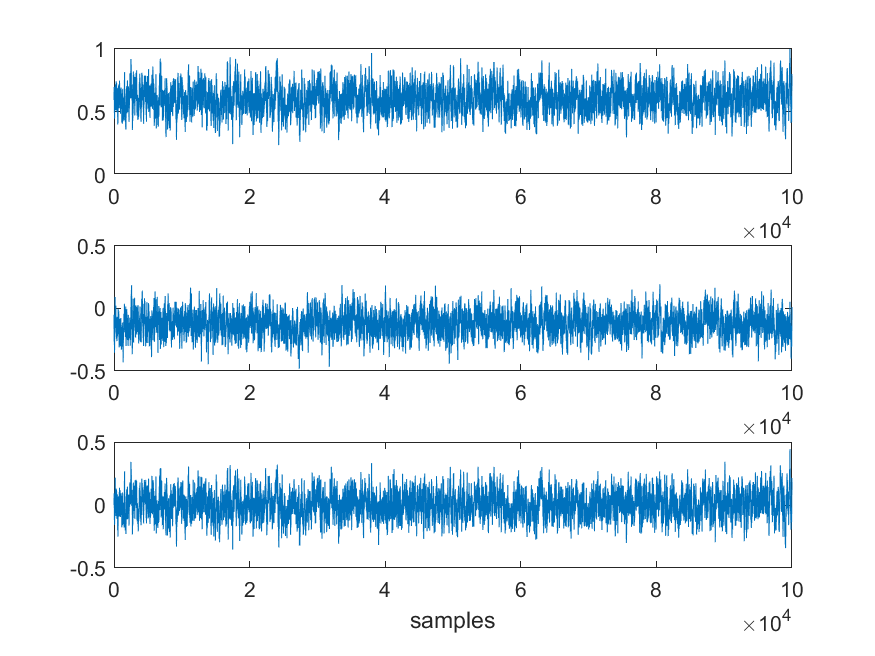}
\includegraphics[width=45mm,height=100mm]{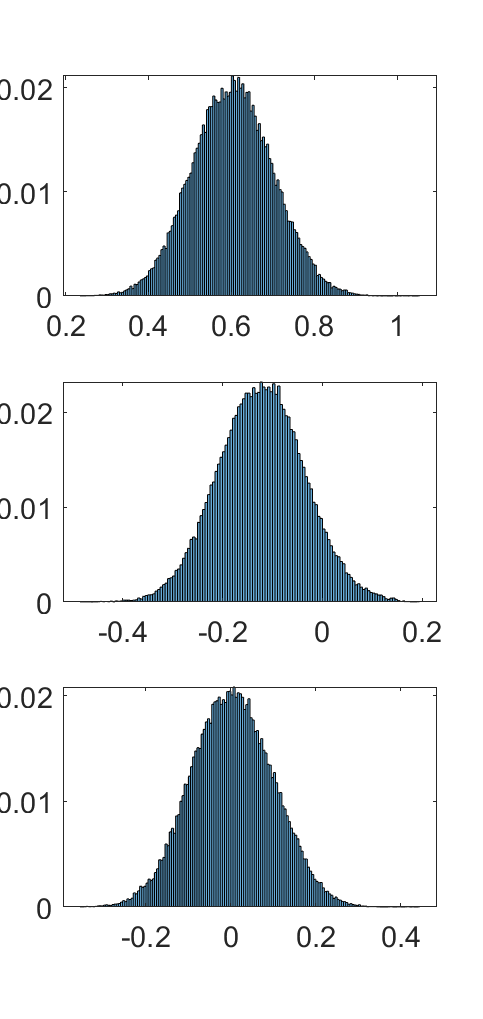}
\caption{The trace plots (left) and histograms (right) show the posterior Markov chains for $V_e$ for the first, 200th, and 400th parameters of the discretized parameter domain (from top to bottom).}
\label{fig3}
\end{figure}

When the parameter field $V_e$ is reconstructed, we can estimate the doping profile from equation \eqref{2.C}, where we use a finite-difference approximation of the Laplace differential operator. 
The doping profile reconstruction is reliable, since we use a physics-informed prior by solving the Poisson equation for some given doping profile. The PDE-constrained prior random field allows the preconditioned Crank--Nicolson MCMC method to beat the curse of dimensionality as much as possible \cite{CRSW13}. 

In Figure \ref{fig4}, we illustrate the true doping profile together with three reconstructed profiles with various applied voltages, while the other parameters are the same as for the true field. For larger applied voltages, the reconstructed doping function ``oscillates'', while the estimation of the function is more accurate for a small applied voltages $U = 2$, which agrees with the theory of semiconductor equations.

Furthermore, we test the Bayesian approach for different hyperparameters including different numbers of truncated Karhunen--Lo\'eve expansion terms for prior sampling as well as for different finite-element mesh sizes. The results are shown in Figures~\ref{fig5} and \ref{fig6}. The use of more terms in the truncated Karhunen--Lo\'eve expansion as well as a finer mesh for the finite-element solution of the forward model (i.e.\ more accurate PDE solution) leads to a more accurate Bayesian extraction of the doping profile.  However, the elapsed time for the Bayesian method on the finer mesh (with maximum diameter $h_{\text{max}} = 0.1$ for the triangle elements of the mesh) is 1.7 times longer than that on the coarser mesh ($h_{\text{max}} = 0.2$). We calculated the mean-square error (MSE) of the reconstructed and the true doping profiles for different applied voltages and $N_{KL}=441$, which are shown in Table~\ref{tbl1}.
\begin{table}[thpb!]
\centering
\caption{Mean-square errors calculated for the doping profile, reconstructed using  the Bayesian approach and finite-element method with mesh size $h_{\text{max}} = 0.1$ for different parameter values.}
\scalebox{1}{
	\begin{tabular}{ |c|c|c|c|} 
	\hline
	
    {MSE} & $U = 2$ & $U = 5$ & $U = 10$  \\
   \hline\hline
    $N_{KL} = 441$ &0.0751 &0.1046&0.2014\\
\cline{2-4}
\cline{2-4} 
\cline{2-4}
\hline
\cline{1-4}
\hline
\end{tabular}}\label{tbl1}
\end{table}
As shown in this table, for larger applied voltages, the MSE is larger. We note that if the number of truncated Karhunen--Lo\'eve expansion terms is small (e.g $N_{KL}=100$), the MSE increases ($\mbox{MSE}=0.1108$, where $U=2$). As mentioned, applying a coarser finite-element mesh leads to a worse MSE but the computational cost is smaller ($\mbox{MSE} = 0.0845$ for $h_{\text{max}} = 0.2$, where $U=2$).

The computation time (elapsed time) for the Bayesian algorithm with $10^5$ samples and using the finite-element method with mesh size $h_{\text{max}} = 0.1$ as the forward solver is about $80$ minutes (using MATLAB R2023a on an AMD Ryzen 5 5600G 6-core processor with 13.5 GB main memory), where more than $90\%$ of the total time is spent on the likelihood evaluation. 
The largest part of the used memory is allocated to store the posterior chain, which is an array of dimension $441\times 10^5$ and takes about $336$~MB memory. The data storage for the PDE evaluation takes about $16$~MB memory, and the memory allocated to the data storage for the prior and proposal covariance calculations, which are arrays of dimension $441\times 441$, takes about $1.5$~MB memory each. 

According to the numerical results, we obtain an optimal reconstruction of the doping profile with the applied voltage $U = 2$, $N_{KL}=441$ Karhunen--Lo\'eve expansion terms, and with a finite-element solver on a finer mesh. Therefore, tuning these design parameters are of importance for an accurate Bayesian estimation in semiconductor devices. This can be done by a Bayesian approach or optimal Bayesian experimental design (see \cite{APSG16}), which is future work.

\begin{figure}[htb!]
\centering
\includegraphics[width=70mm,height=60mm]{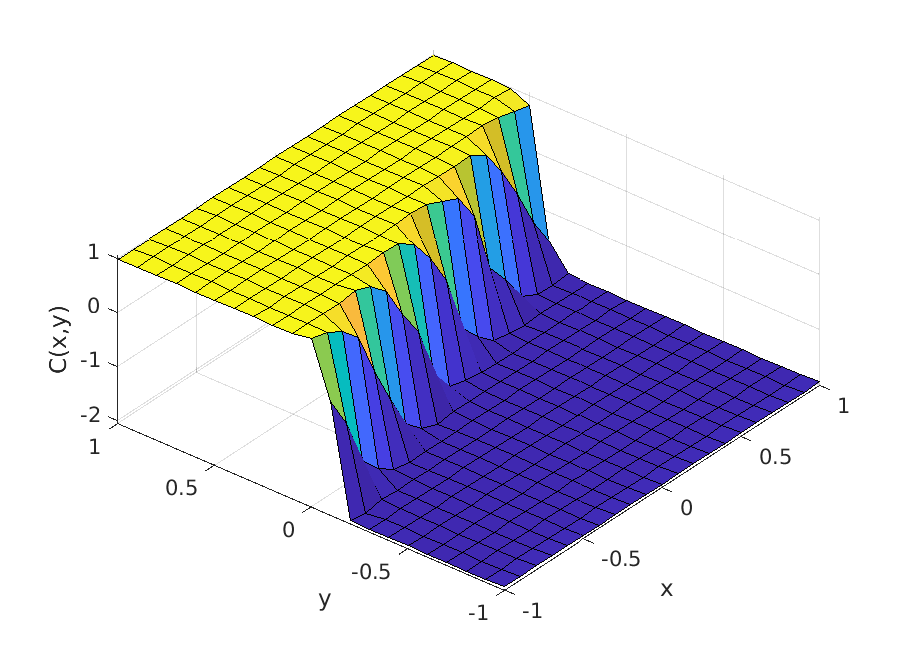} 
\includegraphics[width=70mm,height=60mm]{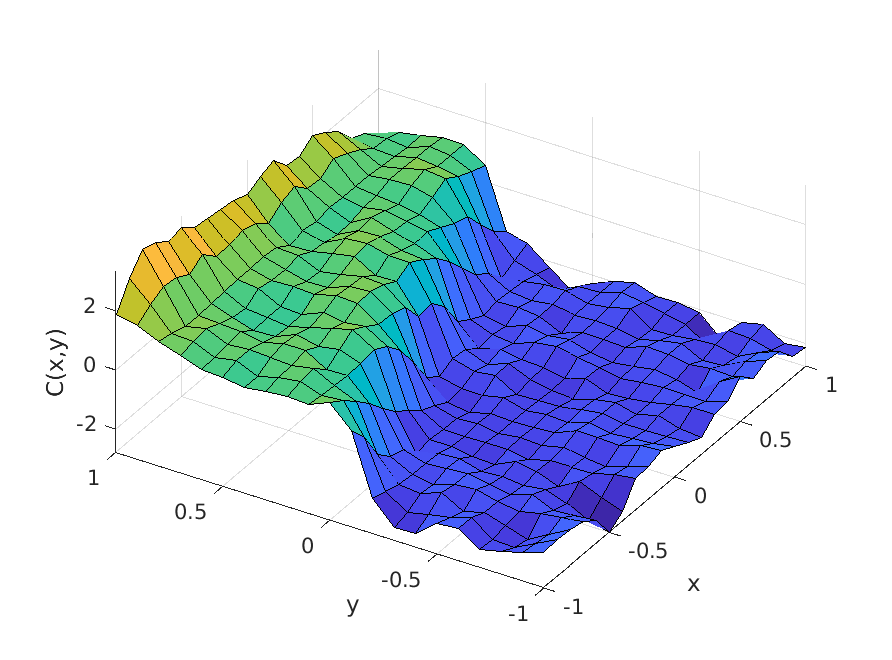}\vskip-5mm
\includegraphics[width=70mm,height=60mm]{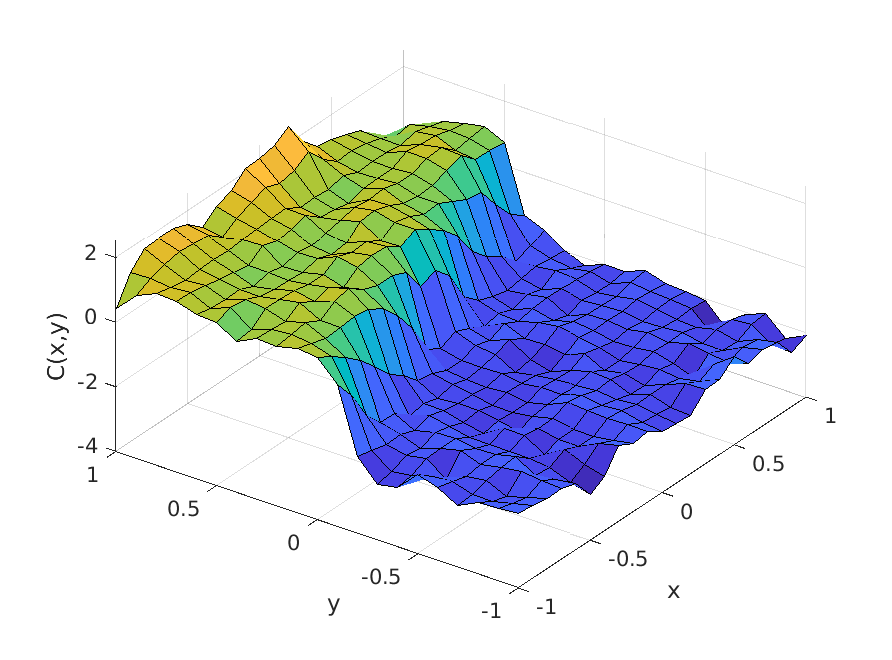}
\includegraphics[width=70mm,height=60mm]{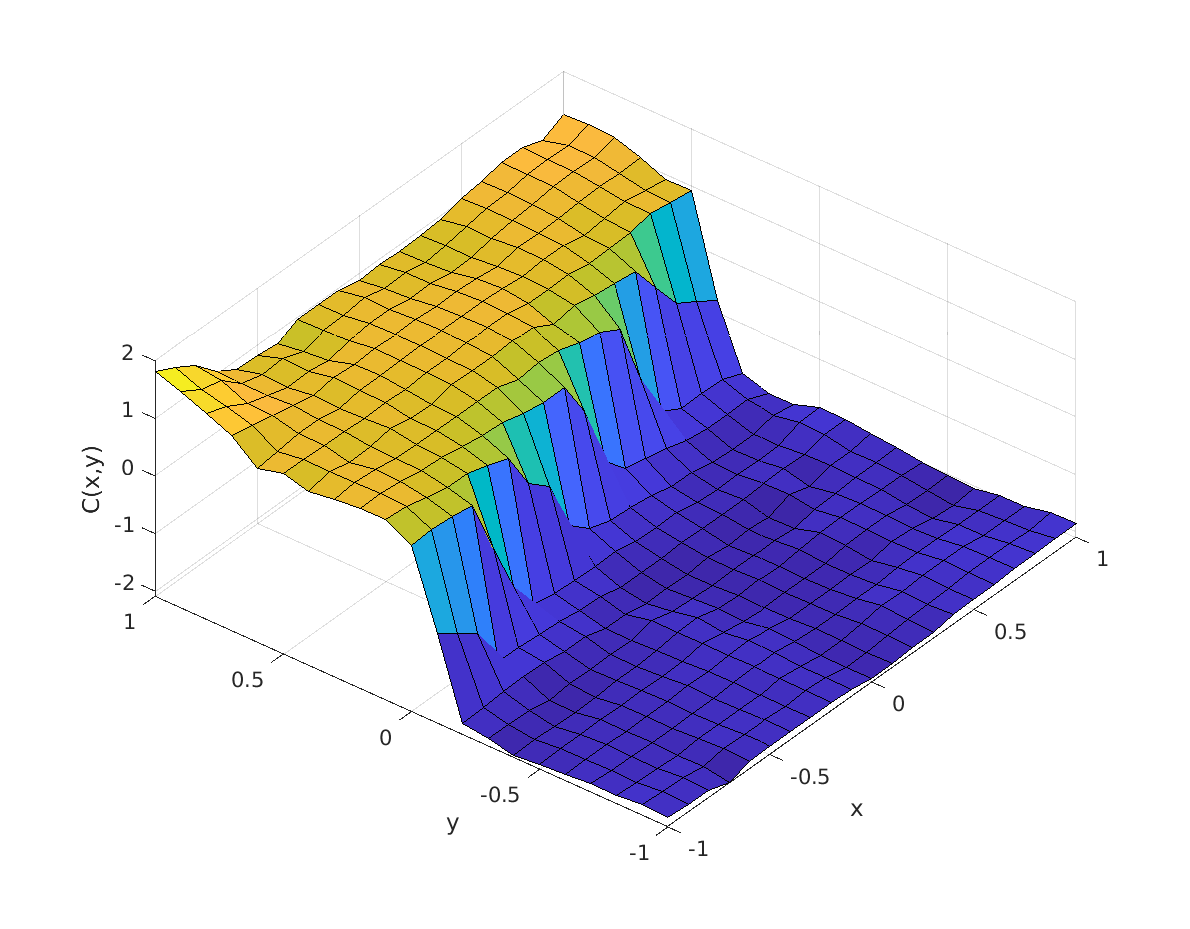}
\caption{The true (top-left) and reconstructed doping profiles using the preconditioned Crank--Nicolson MCMC method with applied voltage $U = 10$ (top-right), $U = 5$ (bottom-left), and $U = 2$ (bottom-right). }
\label{fig4}
\end{figure}

\begin{figure}[htb!]
\centering
\includegraphics[width=55mm,height=45mm]{Figures/TrueCdop-3D-noise0.01-noBar}\hskip-3mm
\includegraphics[width=55mm,height=45mm]{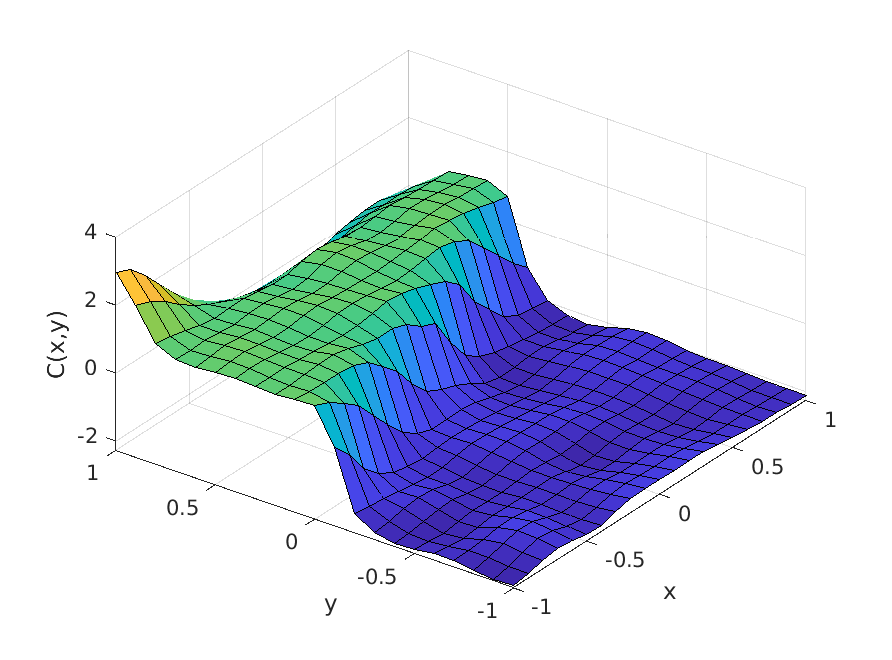}\hskip-3mm
\includegraphics[width=55mm,height=45mm]{Figures/Reconstructed-Cdop-3D-noise0.01-noBar}
\caption{The true (left) and reconstructed doping profiles using the preconditioned Crank--Nicolson MCMC method with prior sampling with $100$ (middle), and $441$ (right) truncated Karhunen--Lo\'eve expansion terms. }
\label{fig5}
\end{figure}

\begin{figure}[htb!]
\centering
\includegraphics[width=55mm,height=45mm]{Figures/TrueCdop-3D-noise0.01-noBar} \hskip-3mm
\includegraphics[width=55mm,height=45mm]{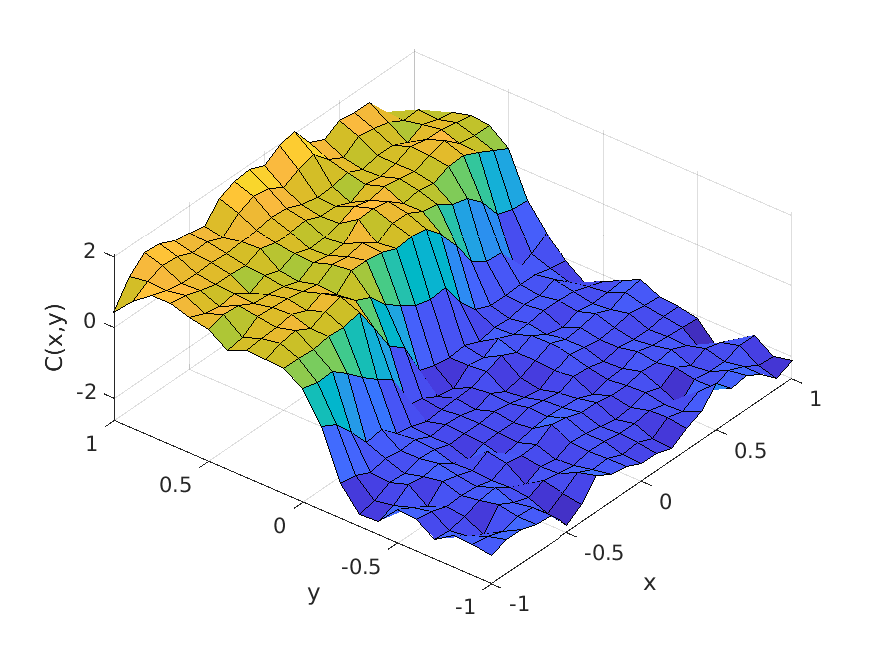}\hskip-3mm
\includegraphics[width=55mm,height=45mm]{Figures/Reconstructed-Cdop-3D-noise0.01-noBar}
\caption{The true (left) and reconstructed doping profiles using the preconditioned Crank--Nicolson MCMC method with a finite-element solver on a mesh with a maximum edge length of $0.2$ (middle), and $0.1$ (right).}
\label{fig6}
\end{figure}

\section{Conclusions}\label{sec.conc}

We have formulated a doping profile inverse problem for semiconductors as a Bayesian inverse problem. Under simplifying assumptions on the physical model, the forward model consists of the stationary linearized unipolar drift-diffusion equations. This leads to a reduced Bayesian inverse problem, consisting of an inverse problem for the exponential of the electric potential in the continuity equation and an explicit source inversion of the Poisson equation. The advantage is that the solution of the Poisson equation, given a doping profile, provides a physics-informed prior knowledge about the inversion parameter field for the main Bayesian inverse problem in the continuity equation. This allows for an accurate posterior estimation and Bayesian reconstruction of the doping profile. 

The extension of the doping inverse problem to a bipolar model-based forward problem is straightforward. To this end, the current density measurements are calculated by summing the electron and hole current densities, i.e.\ $(\widehat{J}_n +\widehat{J}_p)\cdot\nu|_{\Gamma_N}$, and the doping profile is obtained from $C(x)=\delta^2(\gamma(x)- \gamma(x)^{-1}) - \lambda^2\Delta(\ln\gamma)(x)$. The parameter $\gamma$ is reconstructed from two continuity equations for electrons and holes, given their concentrations. 

The Bayesian inversion problem can also be extended to the drift-diffusion equations for memristors, which consist of three continuity equations for the electron, hole, and oxide vacancy densities. While the analysis of the instationary equations is rather well established \cite{JJZ23,JuVe23}, many questions for the stationary equations are still open, for instance the uniqueness of weak solutions, their stability, and the Lipschitz continuity with respect to the data. The reason is the lack of sufficient regularity of the solutions. This issue may be overcome by considering the two-dimensional case only or by considering the degenerate equations \cite{JuVe23}, valid in high-injection regimes. In both cases, the regularity is improved, which may allow us to prove the well-posedness of the Bayesian inverse problem. This is work in preparation.


\end{document}